\let\originallesssim\lesssim
\let\originalgtrsim\gtrsim
\DeclareRobustCommand{\lesssim}{%
  \mathrel{\mathpalette\lowersim\originallesssim}%
}
\DeclareRobustCommand{\gtrsim}{%
  \mathrel{\mathpalette\lowersim\originalgtrsim}%
}
\newcommand{\lowersim}[2]{%
  \sbox\z@{$#1<$}%
  \raisebox{-\dimexpr\height-\ht\z@}{$\m@th#1#2$}%
}
\newtheorem{thm}{Theorem}
\newtheorem{remark}[thm]{Remark}
\newtheorem{lem}[thm]{Lemma}
\newtheorem{cor}[thm]{Corollary}
\newcommand\independent{\protect\mathpalette{\protect\independent}{\perp}} 
\def\independent#1#2{\mathrel{\rlap{$#1#2$}\mkern2mu{#1#2}}}
\DeclareMathOperator{\Var}{Var}
\def\Var{{\rm Var}}
\def\phi{\varphi}
\def\bee{\begin{eqnarray*}}
\def\ene{\end{eqnarray*}}
\begin{document}

\title[Quantitative cube slicing and min-entropy power inequality]{Quantitative form of Ball's Cube slicing in $\mathbb{R}^n$ and equality cases in the min-entropy power inequality}

\author{James Melbourne and Cyril Roberto}

\thanks{The last author is supported by the Labex MME-DII funded by ANR, reference ANR-11-LBX-0023-01 and ANR-15-CE40-0020-03 - LSD - Large Stochastic Dynamics, and the grant of the Simone and Cino Del Duca Foundation, France.}

\address{Centro de Investigaci\'on en Matem\'aticas, Probabilidad y Estad\'isticas.: 36023 Guanajuato, Gto, Mexico.}

\address{Universit\'e Paris Nanterre, Modal'X, FP2M, CNRS FR 2036, 200 avenue de la R\'epublique 92000 Nanterre, France.}

\email{james.melbourne@cimat.mx , croberto@math.cnrs.fr}
\keywords{Quantiative inequalities, cube slicing,  R\'enyi entropy power,  Khintchine.}

\date{\today}

\maketitle
 
\begin{abstract}
We prove a quantitative form of the celebrated Ball's theorem on cube slicing in $\mathbb{R}^n$ and obtain, as a consequence, equality cases in the min-entropy power inequality.
Independently, we also give a quantitative form of Khintchine's inequality in the special case $p=1$.
\end{abstract}

\section{Introduction}

In his seminal paper \cite{ball}, Keith Ball proved that the maximal $(n-1)$-dimensional volume of the section of the cube $C_n \coloneqq [-\frac{1}{2},\frac{1}{2}]^n$ by an hyperplane is $\sqrt{2}$. Therefore proving a conjecture by Hensley \cite{hensley}.

More precisely, for $a=(a_1,\dots,a_n) \in \mathbb{R}^n$ with $|a|:=\sqrt{a_1^2+\dots+a_n^2}=1$,
put $\sigma(a,t)= |C_n \cap H_{a,t}|_{n-1}$ for the volume of the intersection of the cube with the hyperplane  $H_{a,t}=\{x \in \mathbb{R}^n : \langle x,a\rangle =t\}$, where $\langle \cdot,\cdot\rangle $ is the usual scalar product in  $\mathbb{R}^n$ and $|\cdot|_{n-1}$ stands for the ($(n-1)$-dimensional) volume.

\begin{thm}[Ball \cite{ball}] \label{th:ball}
For all unit vector $a$ and all $t \in \mathbb{R}$, it holds  $\sigma(a,t) \leq \sqrt{2}$. 
Moreover, equality holds only if $t=0$ and $a$ has only two non-zero coordinates having value $\frac{1}{\sqrt{2}}$.
\end{thm}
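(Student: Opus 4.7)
The plan is to follow Ball's Fourier-analytic argument. The first step is to represent $\sigma(a,t)$ probabilistically: if $X_1,\dots,X_n$ are i.i.d.\ uniform on $[-\tfrac12,\tfrac12]$, then $Y=\sum_i a_iX_i$ has density $\sigma(a,\cdot)$, since (thanks to $|a|=1$) the slice $C_n\cap H_{a,t}$ is precisely the fibre of the projection $x\mapsto\langle a,x\rangle$ at level $t$. The characteristic function of each $X_i$ is $s\mapsto \sin(s/2)/(s/2)$, so Fourier inversion together with the triangle inequality yields
\[
\sigma(a,t)\;\le\;\frac{1}{2\pi}\int_{\mathbb{R}}\prod_{i=1}^n\left|\frac{\sin(a_is/2)}{a_is/2}\right|\,ds.
\]

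Next I would discard the zero coordinates and, via a short auxiliary bound handling the easy regime $\max_i|a_i|>1/\sqrt 2$ separately (where a direct projection of the slice onto a coordinate hyperplane gives $\sigma(a,t)\le 1<\sqrt 2$), reduce to the situation $a_i^2\le 1/2$ for every active $i$. Then Hölder's inequality with conjugate exponents $p_i:=1/a_i^2\ge 2$, for which $\sum 1/p_i=\sum a_i^2=1$, followed by the change of variables $u=a_is/2$, reduces the problem to the univariate estimate
\[
I(p)\;:=\;\int_{\mathbb{R}}\left|\frac{\sin u}{u}\right|^p du\;\le\;\pi\sqrt{2/p}\qquad(p\ge 2),
\]
with equality only at $p=2$ (where $I(2)=\pi$ by Plancherel). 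Substituting this into the Hölder bound and collecting the factors $|a_i|^{-a_i^2}$ and $(2a_i^2)^{a_i^2/2}$ with the help of $\sum a_i^2=1$ produces the clean estimate $\sigma(a,t)\le\sqrt 2$.

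For the equality analysis, sharpness of the univariate estimate forces each active $p_i$ to equal $2$, hence $|a_i|=1/\sqrt 2$; combined with $\sum a_i^2=1$ this leaves exactly two non-zero coordinates, each of absolute value $1/\sqrt 2$. In that two-coordinate configuration, the Fourier integrand is non-negative everywhere (both sinc factors have identical sign pattern because $|a_1|=|a_2|$), so the triangle inequality applied at the first step is tight only when $e^{-its}$ is constant on the support of the product, i.e., only at $t=0$. The direct computation $\sigma(a,0)=\sqrt 2$, which is the peak of the triangular density of $(X_1\pm X_2)/\sqrt 2$, confirms that this configuration achieves the bound.

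I expect the main obstacle to be the univariate inequality $I(p)\le\pi\sqrt{2/p}$ for $p\ge 2$, which is the heart of Ball's paper. It is not elementary: the integrand is neither nicely monotone in $p$ nor globally comparable to a Gaussian; Ball handles it by a clever pointwise comparison of $|\sin u/u|^p$ with $e^{-pu^2/6}$ on a suitable interval and a convexity-type argument to control the tails. The fact that the bound genuinely fails for $p<2$ is what obstructs a blanket application of Hölder, and it is precisely this failure that necessitates the preliminary reduction $|a_i|\le 1/\sqrt 2$; both points require care.
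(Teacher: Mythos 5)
Your outline follows exactly the route the paper itself recalls from Ball: write $\sigma(a,\cdot)$ as the density of $\sum_i a_iX_i$, apply Fourier inversion and the triangle inequality, run H\"older with exponents $1/a_i^2$ after disposing of the regime where some $|a_j|>1/\sqrt2$, and reduce everything to the one-variable integral inequality $\int_{\mathbb R}|\sin u/u|^p\,du\le \pi\sqrt{2/p}$ for $p\ge 2$ (equivalently $\int|\sin(\pi u)/(\pi u)|^s du\le\sqrt{2/s}$). The reduction and the bookkeeping of constants are correct, and your equality analysis (each active exponent forced to $2$, hence exactly two coordinates equal to $1/\sqrt2$, then positivity of the two-factor integrand forcing $t=0$) is the right argument. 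But as a proof of the theorem the proposal has a genuine hole: the univariate integral inequality, which you yourself identify as the heart of Ball's paper, is not proven but only gestured at, and the sketch you give (a pointwise comparison with $e^{-pu^2/6}$ plus a convexity argument for the tails) is not an accurate account of an argument that actually closes near $p=2$, which is precisely where the sharp constant is delicate. Moreover, the equality statement needs the \emph{strict} inequality for every $p>2$, which must come out of that same analysis; without it the claim that all active $a_i$ equal $1/\sqrt2$ does not follow. So the proposal is a correct reduction to, not a proof of, Ball's theorem.

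One smaller slip: in the regime $\max_i|a_i|>1/\sqrt2$ you assert that projecting the slice onto a coordinate hyperplane gives $\sigma(a,t)\le 1$. The projection is volume-decreasing, so it only shows that the \emph{projected} set has measure at most $1$; the correct conclusion is $\sigma(a,t)\le 1/a_j$ (the cosine factor $a_j$ appears when passing back from the projection to the slice), which indeed is $<\sqrt2$ when $a_j>1/\sqrt2$ but can exceed $1$. The fix is standard and the reduction to $a_i^2\le 1/2$ survives, but as written the intermediate claim is false (it would contradict the equality case $\sigma=\sqrt2$ being approached by nearby vectors).
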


Ball's result  means that the maximal volume of the sections of the cube by hyperplanes are achieved when the section is a product of a $(n-2)$-dimensional cube  $C_{n-2}$ with the  diagonal of a $2$-dimensional cube $C_2$. The original proof is based on Fourier tansform and series expansion. Alternative proofs can be found in \cite{NP00} (based on distribution functions) and very recently in \cite{MR} (by mean of a  transport argument). 

Ball used Theorem \ref{th:ball} to give a negative answer to the famous Busemann-Petty problem in dimension 10 and higher \cite{ball2}. His paper has inspired many research in convex geometry and is still very current. 
We refer to \cite{IT,K21,KR,KK,CKT} to quote just a few of the most recent papers in the field and refer to the reference therein for a more detailed description of the literature.

Our first main result is the following quantitative version of Ball's theorem.
\begin{thm} \label{thm: quantitative Ball}
Fix $\varepsilon \in (0, \frac{1}{75})$. Let $a \in \mathbb{R}^n$ with $|a|=1$ and $t \in \mathbb{R}$ be such that
$\sigma(a,t) \geq (1-\varepsilon)\sqrt{2}$. Then, there exists two indices $j_o, j_1$ such that
$$
\frac{1}{\sqrt{2}}(1-37.5  \varepsilon) \leq |a_{j_o}| , |a_{j_1}|  \leq \frac{1}{\sqrt{2}}(1 +2\varepsilon) .
$$
Moreover, $\sum_{j \neq j_o,j_1}a_j^2 \leq 50 \varepsilon$ and
in particular, for all $j \neq j_o, j_1$, 
$|a_j| \leq \sqrt{50 \varepsilon}$.
\end{thm}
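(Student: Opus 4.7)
The plan is to adapt Ball's original Fourier–Hölder proof of Theorem~\ref{th:ball} with quantitative control at each step. As a first step, I would reduce to the central slice: the function $t \mapsto \sigma(a,t)$ is the density of $\langle a,X\rangle$ with $X$ uniform on $C_n$, hence log-concave, and it is even by symmetry of $C_n$; a log-concave even density is maximized at $0$, so one may assume $t=0$. The starting point is then Ball's Fourier formula
$$\sigma(a,0) \;=\; \int_\mathbb{R}\prod_{j=1}^n\frac{\sin(\pi a_j s)}{\pi a_j s}\,ds.$$
A preliminary elementary bound $\sigma(a,0)\le 1/\max_j|a_j|$ (obtained by conditioning on all coordinates other than a maximal one and using that the marginal of $a_{j^*}X_{j^*}$ has density $1/|a_{j^*}|$) combined with the hypothesis $\sigma(a,0)\ge(1-\varepsilon)\sqrt{2}$ gives $\max_j|a_j|\le(1+2\varepsilon)/\sqrt{2}$: at most one coordinate exceeds $1/\sqrt{2}$, and only by the multiplicative factor $1+2\varepsilon$. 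This already supplies the upper bound $|a_{j_0}|\le(1+2\varepsilon)/\sqrt{2}$ in the statement. I would handle the exceptional large-coordinate case separately (by iterating the argument on the remaining coordinates, rescaled to unit norm in $\mathbb{R}^{n-1}$) and focus below on the main case where $|a_j|\le 1/\sqrt{2}$ for every $j$.

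In this regime, Hölder's inequality with exponents $p_j=1/a_j^2\ge 2$ (which satisfy $\sum 1/p_j = \sum a_j^2 = 1$) combined with Ball's $L^p$ inequality $\int|\operatorname{sinc}|^p\le\sqrt{2/p}$ yields
$$\sigma(a,0) \;\le\; \prod_{j=1}^n\Bigl(\sqrt{2}\;g(1/a_j^2)\Bigr)^{a_j^2},\qquad g(p)\coloneqq \sqrt{p/2}\int_\mathbb{R}\Bigl|\tfrac{\sin\pi u}{\pi u}\Bigr|^p du\in(0,1],$$
with $g(2)=1$. Imposing $\sigma(a,0)\ge(1-\varepsilon)\sqrt{2}$, taking logs, and linearising via $\log(1-x)\le -x$ together with $\log(1-\varepsilon)\ge -\varepsilon(1+O(\varepsilon))$ produces the averaged deficit inequality
$$\sum_{j=1}^n a_j^2\bigl(1-g(1/a_j^2)\bigr)\;\le\; C\varepsilon$$
for an explicit constant $C$.

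The main technical obstacle is now a quantitative stability version of Ball's $L^p$ inequality, namely an explicit bound of the form
$$1-g(p)\;\ge\; c\left(1-\tfrac{2}{p}\right)^2\qquad\text{for all }p\ge 2,$$
with a concrete $c>0$. A plausible route is to compute the first and second derivatives of $g$ at $p=2$ (using that $p=2$ is a global maximum, so $g'(2)=0$ and $g''(2)<0$) to obtain a quadratic lower bound for $1-g$ locally, and then to extend it to all $p\ge 2$ using the known limit $\lim_{p\to\infty}g(p)=\sqrt{3/\pi}<1$ together with a monotonicity or convexity argument on $(2,\infty)$. Granting this stability bound, substitution yields $\sum_j a_j^2(1-2a_j^2)^2\le C\varepsilon/c$. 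The conclusion is then combinatorial: for any $\eta<1/6$, each index with $a_j^2\le 1/2-\eta$ contributes at least $4\eta^2 a_j^2$ to the left-hand side, so $\sum_{j:\,a_j^2\le 1/2-\eta}a_j^2\le C\varepsilon/(4c\eta^2)$, while at most two indices can satisfy $a_j^2>1/2-\eta$ (three would force $\sum_j a_j^2>1$). Calling the two large indices $j_0,j_1$ and carefully tracking explicit constants through the chain of inequalities (and through the exceptional one-large-coordinate case handled separately above) delivers the announced bounds $(1-37.5\varepsilon)/\sqrt{2}\le|a_{j_0}|,|a_{j_1}|\le(1+2\varepsilon)/\sqrt{2}$ and $\sum_{j\ne j_0,j_1}a_j^2\le 50\varepsilon$.
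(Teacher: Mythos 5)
Your overall skeleton (Fourier formula, H\"older with exponents $1/a_j^2$, a stability version of Ball's integral inequality, then combinatorics using $\sum_j a_j^2=1$) is the right one and is close to the paper's, but the step you yourself flag as ``the main technical obstacle'' is not proved, and the route you sketch for it rests on a false premise. You claim that $p=2$ is a global maximum of $g(p)=\sqrt{p/2}\int_{\mathbb R}|\tfrac{\sin\pi u}{\pi u}|^p\,du$, so that $g'(2)=0$ and the deficit $1-g(p)$ is quadratic in $p-2$. In fact $g(2)=1$ is only the maximum of $g$ on $[2,\infty)$, attained at the \emph{endpoint}: $g$ is analytic in $p$, one has $g(p)>1$ for $p$ slightly below $2$, and $g'(2)<0$. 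Concretely, Ball's series expansion (the one used in the proof of Lemma \ref{lem:s}) gives, with $\sigma=\tfrac s2-1$, a deficit whose first-order term is $\sigma(\beta_1-\alpha_1)$ with $\beta_1=\tfrac13>\alpha_1=\tfrac{\sqrt2-1}{\sqrt2}$, i.e.\ the deficit is \emph{linear} in $s-2$, which is exactly what the paper's Lemma \ref{lem:s} extracts ($s\le 2+50\delta$). So your proposed Taylor-at-$p=2$ argument cannot be carried out as stated; a correct proof must produce a first-order (linear) stability bound, e.g.\ by the series-expansion argument.

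Even granting your quadratic bound $1-g(p)\ge c\,(1-2/p)^2$, your endgame cannot reach the constants in the statement. Since $\lim_{p\to\infty}g(p)=\sqrt{3/\pi}$, the best possible global constant is $c\le 1-\sqrt{3/\pi}\approx 0.023$, and your combinatorial step requires $\eta\le 1/6$; the resulting bound $\sum_{j:\,a_j^2\le 1/2-\eta}a_j^2\le C\varepsilon/(4c\eta^2)$ is then of order several hundred times $\varepsilon$, not $50\varepsilon$, and the induced lower bound on the two large coordinates is far weaker than $\tfrac{1}{\sqrt2}(1-37.5\varepsilon)$ on the stated range $\varepsilon<1/75$. (With the linear stability lemma these constants do come out, which is how the paper proceeds: some index must satisfy $I(a_{j})\ge(1-\varepsilon)\sqrt2$, Lemma \ref{lem:s} pins it near $1/\sqrt2$, and an iteration with threshold $(1-3\varepsilon)\sqrt2$ produces the second index.) Finally, your treatment of a coordinate exceeding $1/\sqrt2$ by ``rescaling the remaining coordinates to unit norm'' is not a valid reduction -- the slice function does not factor that way -- and it is also unnecessary: if $1/a_{j}^2\le 2$ then $|a_j|\ge 1/\sqrt2$ already exceeds the required lower bound, so that case is immediate once the argument is organized as in the paper.
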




Ball's slicing theorem, combined with a result of Rogozin \cite{rogozin1987estimate}, was used by Bobkov and Chistyakov \cite{bobkov2014bounds} to derive an optimal inequality for min-entropy power. Namely, they proved that
\begin{equation} \label{eq:bc}
    N_\infty(X_1+\dots+X_n) \geq \frac{1}{2} \sum_{i=1}^\infty N_\infty (X_i)
\end{equation}
for any independent random variables $X_1,\dots,X_n$, with $N_\infty$ the min-entropy power we now define. We may call the latter \emph{Bobkov-Chistyakov’s min-entropy power inequality}.

For a ($\mathbb{R}$ valued) random variable $X$, the \emph{min-Entropy power} is defined as
\begin{align*}
    N_\infty(X) = M^{-2}(X).
\end{align*}
when
\begin{align*}
    M(X) \coloneqq \inf \left\{ c : \mathbb{P}(X \in A) \leq c \ |A| \hbox{ for all Borel } A \right\} < \infty
\end{align*}
and $N_\infty(X) = 0$ otherwise.
When $X$ is absolutely continuous with respect to the Lebesgue measure, with density $f$, then $M(X) = \|f\|_\infty$ is the essential supremum of $f$ with respect to the Lebesgue measure.

The nomenclature ``min-entropy power'' is information theoretic. In that field  the entropy power inequality refers to the fundamental inequality due to Shannon \cite{shannon1948mathematical} which demonstrates that $X_i$ independent random variables with densities $f_i$ satisfy
\begin{align*}
    N(X_1 + \cdots + X_n) \geq  \sum_i N(X_i),
\end{align*}
where $N(X) = e^{2 h(X)}$ denotes the ``entropy power'', with the Shannon entropy $h(X) = - \int f(x) \log f(x) dx$.   The R\'enyi entropy \cite{renyi1961measures}, for $\alpha \in [0,\infty]$ defined as $h_\alpha(X) = \frac{ \int f^\alpha (x) dx}{1-\alpha}$ for $\alpha \in (0,1) \cup (1,\infty)$ and through continuous limits otherwise, gives a parameterized family of entropies that includes the usual Shannon entropy as a special case (by taking $\alpha=1$).  It can be easily seen (through Jensen's inequality, and the expression $h_\alpha(X) = \left( \mathbb{E} f^{\alpha-1}(X) \right)^{\frac 1{1- \alpha}}$) that for a fixed variable $X$, the R\'enyi entropy is decreasing in $\alpha$.  Thus for a fixed variable $X$, the parameter $\alpha = \infty$, $h_\infty(X) = - \log \|f \|_\infty$, furnishes the minimizer of the family $\{ h_\alpha(X) \}_\alpha$, and is often referred to as the ``min-entropy''.   Hence the terminology and notation min-entropy power used $N_\infty(X) = e^{2 h_\infty(X)}$ is in analogy with the Shannon entropy power $N(X) = e^{2 h(X)}$.  Entropy power inequalities for the full class of R\'enyi entropies have been a topic of recent interest in information theory, see \textit{e.g.}\ \cite{bobkov2015entropy, bobkov2017variants, li2018renyi, li2020further, marsiglietti2018entropy, ram2016renyi, rioul2018renyi}, and for more background we refer to \cite{madiman2017forward} and references therein. 

In \cite{bobkov2014bounds} it was observed in a closing remark that the constant $\frac 1 2$ in \eqref{eq:bc} is sharp.  Indeed by taking $n =2$ and $X_1$ and $X_2$ to be \textit{i.i.d.}\ uniform on an interval \eqref{eq:bc} is seen to hold with equality.   In the following theorem, we demonstrate that this is (essentially) the only equality case.  In fact, thanks to the quantitative form of Ball's slicing theorem above, we can derive a quantitative form of Bobkov-Chistyakov's min entropy power inequality, see Corollary \ref{Ball to min-EPI} below, that, in turn, allows us to characterize equality cases in \eqref{eq:bc} which constitutes our second main theorem.

\begin{thm}\label{thm:equality_cases}
For $X_1, \dots, X_n$ independent random variables,
\begin{align} \label{eq: min epi}
    N_\infty(X_1 + \cdots + X_n) \geq \frac 1 2 \sum_{i=1}^n N_\infty(X_i)
\end{align}
with equality if and only if there exists $i_1$ and $i_2$ and $x \in \mathbb{R}$ such that $X_{i_1}$ is uniform on a set $A$, and $X_{i_2}$ is a uniform distribution on $x-A$ and for $i \neq i_1, i_2$, $X_i$ is a point mass.
\end{thm}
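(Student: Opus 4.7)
\emph{Plan of proof.} The sufficiency follows by direct computation: with $X_{i_1}$ uniform on $A$, $X_{i_2}$ uniform on $x-A$, and the remaining $X_i$ point masses summing to a constant $c$, the density of $S := X_1 + \cdots + X_n$ is
\begin{equation*}
    f_S(t) \;=\; \frac{|A \cap (A + t - x - c)|}{|A|^2},
\end{equation*}
whose unique maximum, since $|A|$ is finite and positive, is $1/|A|$ attained at $t = x + c$; this gives $N_\infty(S) = |A|^2 = \tfrac{1}{2}\sum_{i=1}^n N_\infty(X_i)$.

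For necessity, I would assume equality in \eqref{eq: min epi} and retrace the Bobkov-Chistyakov proof of \eqref{eq:bc}. Setting $I := \{i : M(X_i) < \infty\}$, $r_i := 1/(2M(X_i))$ for $i \in I$, and $U_i$ uniform on $[-r_i, r_i]$, that proof factors through
\begin{equation*}
    M(S) \;\overset{\text{Rogozin}}{\leq}\; M\!\left(\sum_{i\in I} U_i\right) \;\overset{\text{Ball (Thm.~\ref{th:ball})}}{\leq}\; \frac{\sqrt{2}}{2|r|} \;=\; \left(\tfrac{1}{2}\textstyle\sum_i N_\infty(X_i)\right)^{-1/2},
\end{equation*}
where $|r|^2 = \sum_{i\in I} r_i^2$. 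Equality in \eqref{eq: min epi} forces equality in both steps, so by the equality case of Ball's theorem the unit vector $(r_i/|r|)_{i\in I}$ has exactly two nonzero coordinates equal to $1/\sqrt{2}$; this yields $|I| = 2$, write $I = \{i_1, i_2\}$, with $M(X_{i_1}) = M(X_{i_2}) =: 1/|A|$ for some $|A| > 0$.

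I would then leverage the saturation of the second inequality, $M(Y) = 1/|A|$ with $Y := X_{i_1} + X_{i_2}$, to pin down the laws of $X_{i_1}$ and $X_{i_2}$. Letting $f := f_{X_{i_1}},\, g := f_{X_{i_2}} \in L^1 \cap L^\infty$ with $\|f\|_\infty = \|g\|_\infty = 1/|A|$, the convolution $f*g$ lies in $C_0(\mathbb{R})$ and attains its supremum at some $t_0$; saturation of
\begin{equation*}
    \tfrac{1}{|A|} \;=\; \int f(s)\,g(t_0-s)\,ds \;\leq\; \|g\|_\infty \int f(s)\,ds \;=\; \tfrac{1}{|A|}
\end{equation*}
forces $g(t_0 - s) = 1/|A|$ for a.e.\ $s \in \supp f$, and by the symmetric bound $f(s) = 1/|A|$ for a.e.\ $s \in t_0 - \supp g$. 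A short measure-theoretic argument reconciling these two inclusions shows that $\supp f$ and $t_0 - \supp g$ coincide up to null sets and that both densities are constant on their supports; taking $A := \supp f$ and $x := t_0$ yields $X_{i_1}$ uniform on $A$ and $X_{i_2}$ uniform on $x-A$.

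Finally, to handle $i \notin I$, I would set $Z := \sum_{i \notin I} X_i$, so that $S = Y + Z$ with $Y,Z$ independent and $f_S = f_Y * \mu_Z$, whence $\sup f_S \leq \sup f_Y$ with equality by assumption. Since $f_Y(t) = |A \cap (A + t - x)|/|A|^2$ attains its maximum uniquely at $t = x$ (using $|A|<\infty$, which precludes nontrivial translation-invariance), the probability measure $\mu_Z$ must be a Dirac mass; hence $Z$ is almost surely constant, and the classical characteristic-function argument, that a sum of independent random variables which is a.s.\ constant forces every summand to be a.s.\ constant, concludes the proof. I expect the main obstacle to be the measure-theoretic extraction in the third paragraph: from the saturation of a single convolution inequality, one must simultaneously identify the supports of $f$ and $g$ as reflection-translates and establish that both densities are proportional to indicators.
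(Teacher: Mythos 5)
Your proposal is correct, but it reaches the conclusion by a route that differs from the paper's in three of its four steps, and the differences are worth recording. For the reduction to two dominant summands, the paper deliberately routes through its quantitative Corollary \ref{Ball to min-EPI} (apply it with $\varepsilon \to 0$ after ordering the $N_\infty(X_i)$), whereas you invoke Theorem \ref{thm: original Rogozin} plus the equality case of Theorem \ref{th:ball} applied to the unit vector $(r_i/|r|)_{i\in I}$; both are legitimate (the equality case is part of the cited Theorem \ref{th:ball}), your version uses only the classical statement while the paper's showcases Theorem \ref{thm: quantitative Ball}. For identifying $X_{i_1},X_{i_2}$ as uniform, the paper first passes to symmetric decreasing rearrangements via the Riesz inequality, extracts that $X_1^*,X_2^*$ are uniform from the saturation $\int fg=\|f\|_\infty$ at the point $0$, transfers back, and only then runs the $C_0$ argument on indicators to locate the translation $x$; you skip the rearrangement entirely by noting that any two densities in $L^1\cap L^\infty\subset L^2$ have a convolution in $C_0(\mathbb{R})$, so the supremum is attained at some $t_0$ and the same chain of inclusions $\{g>0\}\subseteq\{f=\|f\|_\infty\}\subseteq\{f>0\}\subseteq\{g=\|g\|_\infty\}$ (in your translated form $t_0-\{f>0\}\subseteq\{g=\|g\|_\infty\}$, etc.) delivers uniformity and the reflection relation $A$, $x-A$ in one stroke --- a genuine streamlining, since the paper's own $C_0$ lemma applies verbatim to bounded densities, making the rearrangement step dispensable; the ``measure-theoretic reconciliation'' you worry about is exactly this two-line inclusion chase and is not an obstacle (only identify $A$ with $\{f>0\}$ modulo null sets rather than with the closed support). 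For the remaining summands, the paper argues that $Y=X_2+\cdots+X_n$ is again uniform and then uses additivity of variances, which tacitly assumes finite variances (a uniform law on an unbounded set of finite measure need not have one); your argument --- the maximum of $f_{X_{i_1}+X_{i_2}}(t)=|A\cap(A+t-x)|/|A|^2$ is attained only at $t=x$ because a set with $0<|A|<\infty$ cannot be a.e.\ invariant under a nontrivial translation, hence $\mu_Z$ is a Dirac mass and each $X_i$, $i\notin\{i_1,i_2\}$, is degenerate --- avoids this issue altogether and is the more robust route. The only caveat you share with the paper is the wholly degenerate case $\sum_i N_\infty(X_i)=0$, where equality can hold for singular laws; both proofs implicitly assume some $N_\infty(X_i)>0$.
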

Note that this is distinct from the $d$-dimensional case, see \cite{madiman2017rogozin}, where sharp constants can be approached asymptotically for $X_i$ \textit{i.i.d.}\ and uniform on a $d$-dimensional ball.  More explicitly, for $d \geq 2$, if $\Lambda$ denotes all finite collections of independent $\mathbb{R}^d$-valued random variables
\begin{align*}
    \sup_{X \in \Lambda} \frac{N_\infty(X_1 + \cdots + X_m)}{\sum_{i=1}^m N_\infty(X_i)} = \lim_{n \to \infty} \frac{N_\infty(Z_1 + \cdots + Z_n)}{\sum_{i=1}^n N_\infty(Z_i)},
\end{align*}
where $Z_i$ are \textit{i.i.d.}\ and uniform on a $d$-dimensional Euclidean unit ball.

\medskip 

We end with a quantitative Khintchine's inequality.
Though our result is independent, we stress that, as it is well known in the field and as it was pointed out by Ball himself in \cite[Additional remarks]{ball}, the inequality $\sigma(a,t) \leq \sqrt{2}$ of Theorem \ref{th:ball} is however related to Khintchine's inequalities.

Denote by $B_1, B_2, \dots$ symmetric $-1,1$-Bernoulli variables. Khintchine's inequalities assert that, for any $p \in (0,\infty)$ there exist some constant $A_p$, $A'_p$ such that for all $n$ and all $a=(a_1,\dots,a_n) \in \mathbb{R}^n$ it holds
\begin{equation} \label{eq:khintchine}
    A_p \left( \sum_{i=1}^n a_i^2 \right)^\frac{p}{2} 
    \leq 
    R_p(a):= \mathbb{E}\left[ \left|\sum_{i=1}^n a_i B_i\right|^p \right]
    \leq 
    A'_p \left( \sum_{i=1}^n a_i^2 \right)^\frac{p}{2} .
\end{equation}
Such inequalities were proved by Khintchine in a special case \cite{khintchine}, and studied in a more systematic way by Littlewood, Paley and Zygmund.

The best constants in \eqref{eq:khintchine} are known. 
This is due to Haagerup \cite{haagerup}, after partial results by
Steckin \cite{steckin}, Young \cite{young} and Szarek \cite{szarek}. In particular, Szarek proved that $A_1=1/\sqrt{2}$, that was a long outstanding conjecture of Littlewood, see \cite{hall}.

The connection between Theorem \ref{th:ball} and Khintchine's inequalities goes as follows: as fully derived in \cite{CKT}, Ball's theorem  can be rephrased as
$$
\mathbb{E} \left[ \left|\sum_{i=1}^n a_i \xi_i\right|^{-1} \right]
\leq \sqrt{2} \left( \sum_{i=1}^n a_i^2 \right)^{-\frac{1}{2}} 
$$
where $\xi_i$ are i.i.d. random vectors in $\mathbb{R}^3$ uniform on the centered Euclidean unit sphere $S^2$. As a result Ball's slicing of the cube can be seen as a sharp $L_{-1}-L_{2}$ Khintchine-type ienquality.

Our last main result is a quantitative version of (the lower bound in) Khintchine's inequality for $p=1$, that has the same flavour of Theorem \ref{thm: quantitative Ball} (thought being independent).

\begin{thm} \label{th:quantitative khintchine}
Fix $\varepsilon \in (0,1/100)$, an integer $n$ and $a=(a_1,\dots,a_n) \in \mathbb{R}^n$ such that $|a| = 1$, satisfying
$$
R_1(a) \leq \frac{1+\varepsilon}{\sqrt 2} .
$$
Then, there exists two indices $i_1,i_2$ such that 
$$
\frac{1- 30 \varepsilon}{\sqrt 2}\leq |a_{i_1}|, |a_{i_2}| \leq \frac{1+\varepsilon}{\sqrt 2}
$$
Also, it holds $\sum a_i^2 \leq 57 \varepsilon$ and in particular, for any $i \neq i_1, i_2$,
$|a_i| \leq \sqrt{57 \varepsilon}$.
\end{thm}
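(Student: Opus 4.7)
My plan is to decompose $R_1(a)$ around the two largest coordinates of $a$, combine this with the sharp Khintchine constant $A_1=1/\sqrt 2$ applied to the tail, and extract quantitative stability from the deficit at the extremal vector $(1/\sqrt 2,1/\sqrt 2,0,\dots,0)$.

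By the symmetry of the Rademachers I may assume $a_1\ge a_2\ge\cdots\ge a_n\ge 0$ and aim to show the conclusion with $i_1=1,\,i_2=2$. Writing $S=a_1B_1+a_2B_2+T$ with $T=\sum_{j\ge 3}a_jB_j$ symmetric and independent of $(B_1,B_2)$, the pointwise identity $|y+T|+|y-T|=2\max(|y|,|T|)$ together with the symmetry of $T$ yields the distribution-free formula $\mathbb{E}|y+T|=\mathbb{E}\max(|y|,|T|)$. Applied with $y=a_1B_1$ (and the symmetric tail $a_2B_2+T$), this gives the free lower bound $R_1(a)\ge a_1$; combined with the hypothesis it delivers the upper bounds $|a_{i_1}|,|a_{i_2}|\le R_1(a)\le(1+\varepsilon)/\sqrt 2$ of the statement. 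Conditioning on $(B_1,B_2)$ and reapplying the identity produces the key exact formula
\[
R_1(a) \;=\; \tfrac12\,\mathbb{E}\max(a_1+a_2,|T|)\;+\;\tfrac12\,\mathbb{E}\max(a_1-a_2,|T|).
\]

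To bound $a_1,a_2$ from below and control $\rho:=1-a_1^2-a_2^2=\sum_{j\ge 3}a_j^2$, I would combine the above with Szarek's bound $\mathbb{E}|T|\ge\|T\|_2/\sqrt 2=\sqrt{\rho}/\sqrt 2$ applied to the tail. Rewriting $\mathbb{E}\max(a_1+a_2,|T|)=(a_1+a_2)+\mathbb{E}(|T|-(a_1+a_2))^+$, and controlling the small excess by Markov combined with the Khintchine--Kahane estimate $\|T\|_4\le 3^{1/4}\|T\|_2$ (noting that near the extremum $a_1+a_2\approx\sqrt 2$ exceeds essentially the whole support of $|T|$), together with $\mathbb{E}\max(a_1-a_2,|T|)\ge\max(a_1-a_2,\mathbb{E}|T|)$, one obtains a three-variable lower bound $R_1(a)\ge\Phi(a_1,a_2,\rho)$ which equals $1/\sqrt 2$ at the extremum and has a non-degenerate Hessian in the tangent directions of the sphere $a_1^2+a_2^2+\rho=1$. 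Combined with the free bound $R_1(a)\ge a_1$, inverting $\Phi(a_1,a_2,\rho)\le(1+\varepsilon)/\sqrt 2$ then localizes the triple to a linear-in-$\varepsilon$ neighborhood of $(1/\sqrt 2,1/\sqrt 2,0)$.

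The main obstacle is that dropping $\mathbb{E}\max(a_1-a_2,|T|)$ down to $\mathbb{E}|T|$ is wasteful: Szarek's constant $1/\sqrt 2$ is tight only for two-equal-coefficient tails, which violate the ordering $a_2\ge a_3\ge\cdots$ unless $a_2$ itself equals those coefficients---effectively reintroducing a third ``big'' coordinate. The naive bound $\Phi$ therefore attains the weaker global minimum $1/\sqrt 3$ at the infeasible point $(a_1,a_2,\rho)=(1/\sqrt 3,0,2/3)$. To promote it to the sharp $1/\sqrt 2$ with quantitative stability, one must exploit the ordering constraint to rule out such degenerate tails---either via an inductive/bootstrap reapplication of the argument to $T$ itself, or via a direct Fourier estimate in the spirit of the proof of Theorem~\ref{thm: quantitative Ball}. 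Tracking the constants carefully through this refinement is what produces the explicit numerical values $30$ and $57$ appearing in the statement.
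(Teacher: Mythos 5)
Your setup is fine as far as it goes: the identity $|u+v|+|u-v|=2\max(|u|,|v|)$ does give the exact formula $R_1(a)=\tfrac12\,\mathbb{E}\max(a_1+a_2,|T|)+\tfrac12\,\mathbb{E}\max(a_1-a_2,|T|)$, and the free bound $R_1(a)\ge a_1\ge |a_k|$ correctly yields the upper estimate $|a_{i_1}|,|a_{i_2}|\le(1+\varepsilon)/\sqrt2$ (the paper gets the same bound by multiplying by $B_{k_o}$ and applying Jensen). But the core of the theorem --- the lower bounds $(1-30\varepsilon)/\sqrt2$ and the tail bound $57\varepsilon$ --- is never established. As you yourself note, the three-variable lower bound $\Phi(a_1,a_2,\rho)$ you can actually prove from Szarek's constant on the tail bottoms out at $1/\sqrt3$ at $(1/\sqrt3,0,2/3)$, not at $1/\sqrt2$ at the claimed extremum, so inverting $\Phi\le(1+\varepsilon)/\sqrt2$ localizes nothing. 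The proposed repairs (a bootstrap on $T$ exploiting the ordering, or a Fourier estimate ``in the spirit of'' the cube-slicing proof) are left entirely unexecuted; moreover your control of the excess $\mathbb{E}(|T|-(a_1+a_2))^+$ via Markov and the $L_4$--$L_2$ Khintchine--Kahane bound is only effective once one already knows $a_1+a_2$ is close to $\sqrt2$ and $\rho$ is small, which is precisely the conclusion being sought, so the argument as sketched is circular. The constants $30$ and $57$ are never derived. This is a genuine gap, not a matter of bookkeeping.

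For comparison, the paper avoids the two-coordinates-plus-tail decomposition altogether. It uses Haagerup's per-coordinate inequality $R_1(a)\ge\sum_k a_k^2\,F(1/a_k^2)$, where $F(s)=\frac{2}{\pi}\int_0^\infty\bigl(1-|\cos(t/\sqrt{s})|^s\bigr)\,t^{-2}dt$ has the explicit closed form $\frac{2}{\sqrt{\pi s}}\Gamma(\frac{s+1}{2})/\Gamma(\frac s2)$, is increasing, and satisfies $F(2)=1/\sqrt2$. A pigeonhole/contradiction argument then produces one index with $F(1/a_{k_o}^2)\le(1+\varepsilon)/\sqrt2$, and a second pass (using an elementary lower bound on $F'$ on $[2,3]$ and an upper bound on $F'$ just below $2$, their Lemmas \ref{lem:treport} and \ref{lem:treport2}) produces a second index with $F(1/a_{k_1}^2)\le(1+3\varepsilon)/\sqrt2$; the explicit derivative estimates convert these into $|a_{k_o}|\ge(1-10\varepsilon)/\sqrt2$ and $|a_{k_1}|\ge(1-30\varepsilon)/\sqrt2$, and the tail bound follows from $\sum_k a_k^2=1$. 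If you want to salvage your route, the missing ingredient is exactly a mechanism that penalizes spreading mass over three or more comparable coordinates with the sharp constant $1/\sqrt2$ rather than $1/\sqrt3$; Haagerup's pointwise decomposition is the paper's way of building that in from the start.
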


The proofs of Theorem \ref{thm: quantitative Ball} and Theorem \ref{th:quantitative khintchine} are based on a careful analysis of Ball's integral inequality
$$
\int_{-\infty}^\infty \left| \frac{\sin (\pi u)}{\pi u} \right|^s du \leq \sqrt{\frac{2}{s}},
\qquad s \geq 2
$$
and, respectively, Haagerup's integral inequality
$$
\int_0^\infty \left(1 - \left|\cos \left( \frac{u}{\sqrt{s}} \right)\right|^s\right) \frac{du}{u^{p+1}} \geq \int_{-\infty}^\infty \left(1-e^{-u^2/2} \right) \frac{du}{u^{p+1}}, \qquad s \geq 2 
$$
in the special case $p=1$. It is worth mentioning that Theorem \ref{th:quantitative khintchine} is restricted to $p=1$ because the latter integrals can be made explicit only in that case. In order to deal with general $p$ (at least $p \in [p_o,2)$, say, with $p_o \simeq 1.85$ implicitly defined through the Gamma function, see \cite{haagerup}),
one would need to study very carefully the map 
$F_p \colon s \mapsto \int_0^\infty \left(1 - \left|\cos \left( \frac{u}{\sqrt{s}} \right)\right|^s\right) \frac{du}{u^{p+1}}$ and prove that it is increasing and then decreasing on $[2,\infty)$ with careful control of its variations. The difficulty is also coming from the fact that, at $p=p_o$, $F_p(2)=F_p(\infty)$. This in particular makes the quantitative version difficult to state properly. Indeed, for $0<p<p_o$, the extremizers in the lower bound of \eqref{eq:khintchine} are those $a$ with two indices equal to $1/\sqrt{2}$ and the others vanishing. While for $p > p_o$, there are no extremizers for finite $n$ (the "extremizer" is $a=(\frac{1}{\sqrt n},\dots,\frac{1}{\sqrt n})$ in the limit (by the central limit theorem)). At $p=p_o$ the two "extremizers" coexist.
Theorem \ref{th:quantitative khintchine} is therefore only a first attempt in the understanding of quantitative forms of Khintchine's inequalities.  

\medskip

The next sections are devoted to the proof of Theorem \ref{thm: quantitative Ball}, Theorem \ref{thm:equality_cases} and Theorem \ref{th:quantitative khintchine}.

\section{Quantitative slicing: Proof of Theorem \ref{thm: quantitative Ball}}

In this section, we give a proof of Theorem \ref{thm: quantitative Ball}.
We need first to recall part of the original proof by Ball, based on Fourier and anti-Fourier transform. We may omit some details that can be found in 
\cite{ball}.

By symmetry we can assume without loss of generality that $a_j \geq 0$ for all $j$. Reducing the dimension of the problem if necessary, we will further reduces to $a_j \neq 0$ for all $j$.

In \cite{ball} it is proved that $\sigma(a,t) \leq \frac{1}{a_j}$ for all $j$ (see also \cite[step 1]{NP00}). The argument is geometric. Put $e_j:=(0,\dots,0,1,0,\dots0)$ for the $j$-th unit vector of the canonical basis. Then it is enough to observe that the volume of $C_n \cap H_{a,t}$ equals the volume of its projection to the hyperplane $H_{e_j,0}$ (orthogonal to the $j$-th direction) divided by the cosine of the angle of $a$ and $e_j$, that is precisely $a_j$, while the projection of $C_n$ on $H_{e_j,0}$ has volume $1$. Therefore $a_j \leq \frac{1}{\sqrt{2}(1-\varepsilon)} \leq \frac{1}{\sqrt{2}}(1 + 2 \varepsilon)$ for all $j$, which proves one inequality of Theorem \ref{thm: quantitative Ball}.

We follow the presentation of \cite[step 2]{NP00}. Let $\hat S$
be the Fourier transform of $S \colon t \mapsto \sigma(a,t)$. By definition, we have
\begin{align*}
\hat S(u) 
& = 
\int_\mathbb{R} \sigma(a,t)e^{-2i\pi u t} dt \\
& =
\int_{C_n} e^{-2 i \pi u \langle x,a \rangle} dx \\
& = 
\prod_{j=1}^n \int_{-\frac{1}{2}}^\frac{1}{2} e^{-2 i \pi u a_j a_j} dx_j \\
& =
\prod_{j=1}^n \frac{\sin(\pi a_j u)}{\pi a_j u} .
\end{align*}
Therefore, taking the  anti-Fourier transform, Ball obtained the following explicit formula\footnote{An alternative explicit formula is given by Franck and Riede \cite{FR} (with different normalization). 
The authors ask if there could be an alternative proof of Ball's theorem based on their formula.
} for $\sigma(a,t)$:
\begin{align*}
\sigma(a,t) 
& =
\int_{-\infty}^\infty \hat S(u) e^{2 i \pi u t} du \\
& = \int_{-\infty}^\infty e^{2\pi i ut} \prod_{j=1}^n \frac{\sin(\pi a_ju)}{\pi a_ju} du  .
\end{align*}
Applying Holder's inequality, since $a_1^2+\dots+a_n^2=1$, one gets
\begin{align} \label{eq:ball-holder}
\sigma(a,t)  
& \leq 
\int_{-\infty}^\infty  \prod_{j=1}^n \left| \frac{\sin(\pi a_ju)}{\pi a_ju} \right| du \nonumber \\
& \leq 
 \prod_{j=1}^n  \left( \int_{-\infty}^\infty\left| \frac{\sin(\pi a_ju)}{\pi a_ju} \right|^{1/a_j^2} du \right)^{a_j^2} .
\end{align}
Ball's theorem follows from the fact that 
$I(a_j) \coloneqq \int_{-\infty}^\infty\left| \frac{\sin(\pi a_ju)}{\pi a_ju} \right|^{1/a_j^2} du \leq \sqrt{2}$
with equality only if $a_j=1/\sqrt{2}$.
Changing variable, this is equivalent to proving that
\begin{equation} \label{eq:ball}
\int_{-\infty}^\infty\left| \frac{\sin(\pi u)}{\pi u} \right|^{s} du < \sqrt{\frac{2}{s}}
\end{equation}
for every $s >2$ (for $s=2$ this is an identity). The latter is known as Ball's integral inequality
and was proved in \cite{ball}\footnote{An asymptotic study of such integrals can be found in \cite{KOS}.} (see \cite{NP00,MR} for alternative approaches).

One key ingredient in the proof of Theorem \ref{thm: quantitative Ball} is a reverse form of Ball's integral inequality given in Lemma \ref{lem:s} below.

Turning to our quantitative question, observe that if for all $j=1,\dots,n$, $I(a_j) < (1-\varepsilon)\sqrt{2}$, then \eqref{eq:ball-holder} would imply that $\sigma(a,t) < (1-\varepsilon)\sqrt{2}$, a contradiction. Therefore, there must exist $j_o$ such that $I(a_{j_o}) \geq (1-\varepsilon)\sqrt{2}$.
The aim is now to prove that $a_{j_o}$ is closed to $1/\sqrt{2}$. In fact, changing variables ($s=1/a_{j_o}^2 \geq 2(1-\varepsilon)$), we observe that 
\begin{align*}
I(a_{j_o}) 
& =  
\int_{-\infty}^\infty\left| \frac{\sin(\pi a_ju)}{\pi a_ju} \right|^{1/a_j^2} du \\
& =
\sqrt{s} \int_{-\infty}^\infty\left| \frac{\sin(\pi u)}{\pi u} \right|^{s} du .
\end{align*}
Hence, $I(a_{j_o}) \geq (1-\varepsilon)\sqrt{2}$ is equivalent to saying that
$$
\int_{-\infty}^\infty\left| \frac{\sin(\pi u)}{\pi u} \right|^{s} du 
\geq  
(1-\varepsilon) \sqrt{\frac{2}{s}} .
$$
Lemma \ref{lem:s} guarantees that, if $s \geq 2$, then $s=\frac{1}{a_{j_o}^2} \leq 2 + 50 \varepsilon$.
If $s \leq 2$ then $\frac{1}{a_{j_o}^2} \leq 2$ which amounts to $a_{j_o} \geq \frac{1}{\sqrt{2}}$. In any case
\begin{align*}
a_{j_o} 
& \geq \frac{1}{\sqrt{2+50 \varepsilon}} \\
& \geq 
\frac{1}{\sqrt{2}} (1 - \frac{25}{2} \varepsilon)
\end{align*}
since $\frac{1}{\sqrt{1+t}} \geq 1-\frac{1}{2}t$ for any $t \in (0,1)$.

Iterating the argument, assume that for all $j \neq j_o$, $I(a_j) < (1-3\varepsilon)\sqrt{2}$. Since $I(a_{j_o}) \leq \sqrt{2}$, 
\eqref{eq:ball-holder} would imply that
\begin{align*}
\sigma(a,t) 
& < 
(1-3\varepsilon)^{1-a_{j_o}^2} \sqrt{2} \\
& \leq 
(1-3\varepsilon)^{1 - \frac{1}{2(1-\varepsilon)^2}} \sqrt{2} \\
& \leq 
(1-\varepsilon)\sqrt{2}
\end{align*}
where we used that  $a_{j_o} \leq 1/(\sqrt{2}(1-\varepsilon))$ and some algebra. This is a contradiction. Therefore, there exists a second index $j_1 \neq j_o$ such that $I(a_{j_1}) \geq (1-3\varepsilon)\sqrt{2}$.
Proceeding as for $j_o$, we can conclude that necessarily 
$$
a_{j_1} \geq \frac{1}{\sqrt{2}} (1 - \frac{75}{2} \varepsilon) .
$$
The expected result concerning $a_{j_o}$, $a_{j_1}$ follows.

Since $a_1^2+\dots+a_n^2 =1$ we can conclude that 
$$
\sum_{j \neq j_0,j_1} a_j^2 \leq 1 - \frac{1}{2}(1-\frac{25}{2}\varepsilon)^2
- \frac{1}{2}(1-\frac{75}{2}\varepsilon)^2 \leq 50 \varepsilon .
$$
Thus, $a_j^2 \leq 50 \varepsilon$ for all $j \neq j_o,j_1$. 
This ends the proof of the theorem.

\begin{lem}\label{lem:s}
Let $s \geq 2$ be such that
$$
\int_{-\infty}^\infty\left| \frac{\sin(\pi u)}{\pi u} \right|^{s} du 
\geq  
(1-\delta) \sqrt{\frac{2}{s}}
$$
for some small $\delta > 0$.  Then, $s \leq 2 + 50\delta$.
\end{lem}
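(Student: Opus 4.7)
The statement is equivalent to the quantitative bound $P(s) \leq 1 - (s-2)/50$ for $s \geq 2$ in the relevant range, where
\[
P(s) := \sqrt{s/2}\, F(s), \qquad F(s) := \int_{-\infty}^\infty \left| g(u) \right|^{s} du, \qquad g(u) := \frac{\sin(\pi u)}{\pi u} .
\]
Ball's inequality \eqref{eq:ball} gives $P(s) \leq 1$ for $s \geq 2$, with $P(2) = 1$ by Plancherel, so the task is to quantify the rate at which $P$ drops below $1$ for $s > 2$.

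I would first carry out a local analysis at $s = 2$. Direct differentiation yields
\[
P'(2) = \tfrac{1}{4} + F'(2), \qquad F'(2) = \int_{-\infty}^\infty g(u)^{2}\,\log |g(u)|\, du.
\]
The decisive numerical input is the bound $F'(2) \leq -27/100$, which yields $P'(2) \leq -1/50$ and feeds the target constant $50$. To prove it, I would split the integration at $|u| = 1$: on $|u| \leq 1$, use the Taylor bound $\log g(u) \leq -\pi^2 u^2/6$ (all further terms in the expansion of $\log g$ being negative), weighted by $g(u)^2$, producing a concrete negative contribution; on $|u| \geq 1$, the elementary inequality $|g(u)| \leq 1/(\pi |u|)$ both bounds $g(u)^2$ and feeds the logarithm, yielding a convergent series over the zeros of $\sin$ that contributes a further negative amount. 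The actual value $F'(2) \approx -0.35$ leaves comfortable room for rigorous majorization.

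The second step would be to promote the infinitesimal slope $P'(2) \leq -1/50$ to a genuine linear bound on a compact interval $[2, s_0]$, where $s_0$ need only be slightly larger than $2 + 50 \delta_{\max}$ for the range of $\delta$ in use (for the application to Theorem \ref{thm: quantitative Ball} one may take $s_0 = 4$, since there $\delta < 1/25$). Taylor's theorem with remainder reduces this to a uniform upper bound on $P''(s)$ over $[2, s_0]$, which follows from
\[
|F''(s)| \leq \int |g(u)|^{s} (\log |g(u)|)^{2}\, du \leq \int g(u)^{2} (\log |g(u)|)^{2}\, du < \infty ,
\]
where the last integral is controlled explicitly by the same dichotomy used in Step~1.

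The main obstacle is the tightness of the bound $F'(2) \leq -27/100$: the constant $50$ leaves relatively little slack, so the cut-off at $|u| = 1$, the Taylor truncation near $0$, and the tail series on $|u| \geq 1$ must all be organized with care to avoid losing more than a small fraction of the available margin. A secondary concern is to balance constants in the Taylor remainder so that the second-order correction on $[2, s_0]$ does not erase the $1/50$ slope; this is where one exploits that the lemma need only be applied for $\delta$ small, keeping $s_0 - 2$ bounded.
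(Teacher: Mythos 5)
Your reduction to the statement $P(s)\le 1-(s-2)/50$ with $P(s)=\sqrt{s/2}\int|\tfrac{\sin\pi u}{\pi u}|^s du$, and your first step, are sound: $P(2)=1$, $P'(2)=\tfrac14+F'(2)$, and $F'(2)=\int g^2\log|g|\,du\approx -0.35$, so a rigorous bound $F'(2)\le -27/100$ is attainable with genuine margin by the splitting you describe. The gap is in the second step. From $P'(2)\le -1/50$ and $\sup_{[2,s_0]}|P''|\le M$, Taylor only gives $P(s)\le 1-\frac{s-2}{50}+\frac{M}{2}(s-2)^2$, which is strictly weaker than the linear bound you need for every $s>2$; since your slope target is \emph{exactly} $-1/50$, there is zero slack to absorb the quadratic term, and the scheme as written only yields $s\le 2+C\delta$ with $C>50$ (approaching $50$ only as $\delta\to 0$). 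This is repairable because the true slope is about $-0.1$: prove a strictly better bound, say $P'(2)\le -1/20$, and then the linear bound with slope $-1/50$ holds on $[2,2+c/M]$ for an explicit $c$; but then the admissible range of $\delta$ is tied to your bound on $M$, and this bookkeeping is exactly what is missing. A second, independent gap: the hypothesis does not confine $s$ to $[2,s_0]$ a priori, so you must separately show $P(s)<1-\delta$ for all $s>s_0$ (e.g.\ via monotonicity of $P$, or a crude bound such as $P(4)=2\sqrt2/3<0.95$ together with an argument for $s\ge 4$). Your justification for taking $s_0=4$ ``since there $\delta<1/25$'' presupposes the conclusion of the lemma rather than deriving it from the hypothesis.

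It is worth contrasting this with the paper's route, which avoids both problems. Following Ball, both $\frac1\pi\int|\sin t/t|^{2(1+\sigma)}dt$ and $\sqrt{1/(1+\sigma)}$ are written as $1-\sum_{n\ge1}\frac{|\sigma(\sigma-1)\cdots(\sigma-n+1)|}{n!}\beta_n$ (resp.\ $\alpha_n$) with $\sigma=\tfrac s2-1$, and since $\beta_n>\alpha_n$ every term of the difference is positive; discarding all but the $n=1$ term turns the hypothesis directly into the \emph{exact} linear inequality $\sigma(\beta_1-\alpha_1)\le\delta$ with $\beta_1-\alpha_1=\frac{3-2\sqrt2}{3\sqrt2}$, giving $s\le 2+\frac{6\sqrt2}{3-2\sqrt2}\delta\le 2+50\delta$ on the whole relevant range with no remainder to control. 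In effect, your $P'(2)$ computation is the $n=1$ term in disguise; what the series buys, and what a pure Taylor argument must work to recover, is that positivity converts an endpoint-derivative estimate into a global bound. If you want to keep your calculus approach, the concrete fixes are: (i) a strictly stronger bound on $F'(2)$ than $-27/100$, (ii) an explicit bound on $\sup|P''|$ with the resulting restriction on $\delta$ stated, and (iii) a separate elementary estimate excluding large $s$.
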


\begin{proof}
Set $\sigma=\frac{s}{2}-1$. We use the technology developed in \cite{ball}
where it is proved that
$$
\int_{-\infty}^\infty\left| \frac{\sin(\pi u)}{\pi u} \right|^{s} du 
= 
\frac{1}{\pi} \int_{-\infty}^\infty \left| \frac{\sin^2(t)}{t^2} \right|^{1+\sigma} dt 
=
1 - \sum_{n=1}^\infty \frac{|\sigma(\sigma-1)\dots(\sigma-n+1)|}{n!}\beta_n
$$
and
$$
\sqrt{\frac{2}{s}} = \sqrt{\frac{1}{1+\sigma}} 
= 
\frac{1}{\pi} \int_{-\infty}^\infty \left( e^{-t^2/\pi} \right)^{1+\sigma} dt 
=
1 - \sum_{n=1}^\infty \frac{|\sigma(\sigma-1)\dots(\sigma-n+1)|}{n!}\alpha_n
$$
with
$$
\alpha_n \coloneqq \frac{1}{\pi} \int_{-\infty}^\infty e^{-t^2/\pi} \left( 1 - e^{-t^2/\pi} \right)^n dt,
\qquad
\beta_n \coloneqq \frac{1}{\pi} \int_{-\infty}^\infty \frac{\sin^2(t)}{t^2} \left( 1 - \frac{\sin^2(t)}{t^2}\right)^n dt .
$$
Therefore, the assumption
$$
\int_{-\infty}^\infty\left| \frac{\sin(\pi u)}{\pi u} \right|^{s} du 
\geq  
(1-\delta) \sqrt{\frac{2}{s}}
$$
can be recast
$$
\sum_{n=1}^\infty \frac{|\sigma(\sigma-1)\dots(\sigma-n+1)|}{n!}(\beta_n-\alpha_n) \leq \delta \sqrt{\frac{1}{1+\sigma}} . 
$$
Note that, in \cite{ball}, it is proved that $\alpha_n < \beta_n$ so that the left hand side of the latter is positive and in fact an infinite sum of positive terms. Hence, the first term of the sum
must not exceed the right hand side. Since $\beta_1=\frac{1}{3}$ and $\alpha_1=  \frac{\sqrt{2}-1}{\sqrt{2}}$, it holds 
$$
\sigma \frac{3-2\sqrt{2}}{3\sqrt{2}} = \sigma (\beta_1-\alpha_1)  
\leq 
\delta \sqrt{\frac{1}{1+\sigma}} \leq \delta .
$$
Returning to the variable $s$ it follows that $s \leq 2 + \delta \frac{6 \sqrt{2}}{3-2\sqrt{2}}$ from which the expected result follows since $\frac{6 \sqrt{2}}{3-2\sqrt{2}} \simeq 49.46\leq 50$.
\end{proof}


\section{min-Entropy power inequality}
In this section we extend the quantitative slicing results for the unit cube, to a quantitative version (Corollary \ref{Ball to min-EPI} below) of Bobkov and Chistyakov's min-entropy power inequality (Inequality \eqref{eq:bc}) for random variables in $\mathbb{R}$.  Then we prove the full characterization of extremizers of this min-entropy power inequality, \textit{i.e.}\ we prove Theorem \ref{thm:equality_cases}.

The quantitative version  of Bobkov and Chistyakov's min-entropy power inequality reads as follows.

\begin{cor} \label{Ball to min-EPI}
For $X_i$ independent random variables and $\varepsilon \in (0,1/75)$ if 
\begin{align} \label{eq:almost-equality}
    N_\infty \left((1-\varepsilon) \sum_{i=1}^n X_i \right) \leq  \frac{1}{2} \sum_{i=1}^n N_\infty(X_i) ,
\end{align}
then there exists indices $i_o$ and $i_1$ such that
\begin{align*}
   (1-37.5\varepsilon)^2 \left(\frac{1}{2} \sum_{i=1}^n N_\infty(X_i) \right) \leq N_\infty(X_{i_o}), N_\infty(X_{i_1}) \leq (1+2\varepsilon)^2 \left( \frac{1}{2} \sum_{i=1}^n N_\infty(X_i) \right)
\end{align*}
while
\begin{align*}
    \sum_{i \neq i_o, i_1} N_\infty(X_i) \leq 50 \varepsilon \sum_{i=1}^n N_\infty(X_i).
\end{align*}
\end{cor}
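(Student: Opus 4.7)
The plan is to reduce the corollary to Theorem \ref{thm: quantitative Ball} by the same route that Bobkov and Chistyakov used to derive \eqref{eq:bc} from Theorem \ref{th:ball}, namely Rogozin's rearrangement inequality \cite{rogozin1987estimate}. Assume without loss of generality that $N_\infty(X_i) > 0$ for every $i$ (indices with $N_\infty(X_i) = 0$ contribute nothing to the right-hand sides and can be dropped). Set
$$
\ell_i := \frac{1}{M(X_i)} = \sqrt{N_\infty(X_i)}, \qquad |\ell| := \sqrt{\sum_i \ell_i^2}, \qquad a_i := \frac{\ell_i}{|\ell|},
$$
so $a = (a_1,\dots,a_n)$ is a unit vector in $\mathbb{R}^n$.

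Using the scaling identity $N_\infty(cX) = c^2 N_\infty(X)$, the hypothesis \eqref{eq:almost-equality} rewrites as $M(\sum_i X_i) \geq \sqrt 2 (1-\varepsilon)/|\ell|$. Let $U_i$ be independent, $U_i \sim \mathrm{Unif}[-\ell_i/2, \ell_i/2]$. Rogozin's inequality gives $M(\sum_i X_i) \leq M(\sum_i U_i)$, and writing $U_i = \ell_i V_i$ with $V_i$ i.i.d.\ uniform on $[-\tfrac12,\tfrac12]$, one recognizes $\sum_i U_i = |\ell|\sum_i a_i V_i$, whose density at the point $|\ell| t$ is exactly $\sigma(a,t)/|\ell|$. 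Thus
$$
\frac{\sqrt 2 (1-\varepsilon)}{|\ell|} \leq M\Bigl(\sum_i X_i\Bigr) \leq M\Bigl(\sum_i U_i\Bigr) = \frac{1}{|\ell|} \sup_t \sigma(a,t),
$$
so $\sigma(a,t^\ast) \geq \sqrt 2 (1-\varepsilon)$ for some $t^\ast\in\mathbb{R}$.

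Since $\varepsilon\in(0,1/75)$, Theorem \ref{thm: quantitative Ball} applies to the unit vector $a$ and yields indices $i_o, i_1$ with $\tfrac{1}{\sqrt 2}(1-37.5\varepsilon) \leq a_{i_o}, a_{i_1} \leq \tfrac{1}{\sqrt 2}(1+2\varepsilon)$ and $\sum_{j\neq i_o,i_1} a_j^2 \leq 50\varepsilon$. Squaring and using the identity $a_j^2 = N_\infty(X_j)/\sum_i N_\infty(X_i)$ converts these three bounds directly into the three claimed inequalities of the corollary.

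The proof contains no real obstacle beyond this dictionary: the only delicate point is the reduction to variables with $N_\infty(X_i) > 0$, which is justified by noting that if $N_\infty(X_i) = 0$ for some $i$, the conclusion for that index is trivial ($0 \leq 50\varepsilon\sum_j N_\infty(X_j)$), and Rogozin's inequality applied to the remaining variables still dominates $M(\sum_i X_i) \leq M(\sum_{j: N_\infty(X_j)>0} U_j)$ after conditioning on the others; the rest of the argument is unchanged.
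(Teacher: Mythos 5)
Your proof is correct and follows essentially the same route as the paper: Rogozin's theorem replaces each $X_i$ by a uniform with matching min-entropy, rescaling reduces the hypothesis to $\sigma(a,t)\geq (1-\varepsilon)\sqrt{2}$ for the unit vector with $a_i^2=N_\infty(X_i)/\sum_j N_\infty(X_j)$, and Theorem \ref{thm: quantitative Ball} translated back through this dictionary yields the three stated bounds. The only cosmetic differences are that the paper locates the maximizing point at $t=0$ via symmetry and log-concavity of the projected uniform density, whereas you work with a (near-)maximizing $t^\ast$, which Theorem \ref{thm: quantitative Ball} equally permits, and that you treat the $N_\infty(X_i)=0$ indices a bit more explicitly.
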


Its proof relies on the following result by Rogozin.

\begin{thm}[Rogozin \cite{rogozin1987estimate}] \label{thm: original Rogozin}
For $X_i$ independent random variables, let $Z_i$ be independent random variables uniform on an origin symmetric interval chosen such that $N_\infty(X_i) = N_\infty(Z_i)$, with the interpretation that $Z_i$ is deterministic, and equal to zero, in the case that $N_\infty(X_i) = 0$.  Then,
\begin{align*}
    N_\infty(X_1 + \cdots + X_n) \geq N_\infty(Z_1 + \cdots + Z_n).
\end{align*}
\end{thm}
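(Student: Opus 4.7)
The plan is to reformulate the inequality at the level of densities and derive it from rearrangement plus a Jensen-type bound. Writing $f_i$ for the density of $X_i$ (if it exists) and $g_i = M_i \mathbf{1}_{[-c_i/2, c_i/2]}$ for the density of $Z_i$ (with $M_i := \|f_i\|_\infty = 1/c_i$), the statement reduces to the convolution inequality $\|f_1 * \cdots * f_n\|_\infty \leq \|g_1 * \cdots * g_n\|_\infty$. If some $X_i$ has no density then $N_\infty(X_i) = 0$ and $Z_i = 0$; conditioning on $X_i$ and using translation invariance of Lebesgue measure yields $M(X_1 + \cdots + X_n) \leq M(\sum_{j \neq i} X_j)$, so such an $X_i$ may be dropped from both sides without weakening the inequality. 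Thus we may assume every $X_i$ has a density.

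Next I would reduce to symmetric decreasing densities. Applied to the representation $(f_1 * \cdots * f_n)(y) = \int f_1(y - y_2 - \cdots - y_n) \prod_{j \geq 2} f_j(y_j) \, dy_2 \cdots dy_n$, the Brascamp--Lieb--Luttinger rearrangement inequality (together with translation invariance of $f \mapsto f^*$ to absorb $y$ into $f_1$) gives $(f_1 * \cdots * f_n)(y) \leq (f_1^* * \cdots * f_n^*)(0)$ for every $y$. Since a convolution of symmetric decreasing functions is symmetric decreasing with supremum at the origin, the right-hand side equals $\|f_1^* * \cdots * f_n^*\|_\infty$, and so $\|f_1 * \cdots * f_n\|_\infty \leq \|f_1^* * \cdots * f_n^*\|_\infty$.

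The crux is to replace each $f_i^*$ by $g_i$ one factor at a time. Fix $i$ and let $h$ be the convolution of the $f_j^*$ for $j \neq i$, which is symmetric decreasing. The layer-cake representations
\[
f_i^*(x) = \int_0^{M_i} \mathbf{1}_{[-r_i(t)/2, r_i(t)/2]}(x)\,dt, \qquad g_i(x) = \int_0^{M_i} \mathbf{1}_{[-c_i/2, c_i/2]}(x)\,dt,
\]
with $r_i(t) = |\{f_i^* > t\}|$ (satisfying $\int_0^{M_i} r_i(t)\, dt = 1 = M_i c_i$) together with Fubini give
\[
(f_i^* * h)(0) = \int_0^{M_i} H(r_i(t))\,dt, \qquad (g_i * h)(0) = M_i\, H(c_i),
\]
where $H(r) := \int_{-r/2}^{r/2} h(y)\,dy$. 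Because $h$ is symmetric decreasing, $H'(r) = h(r/2)$ is nonincreasing in $r$, so $H$ is concave; Jensen's inequality against the probability measure $dt/M_i$ on $[0, M_i]$ then gives $\tfrac{1}{M_i}\int_0^{M_i} H(r_i(t))\, dt \leq H\bigl(\tfrac{1}{M_i} \int_0^{M_i} r_i(t)\,dt\bigr) = H(c_i)$, i.e.\ $(f_i^* * h)(0) \leq (g_i * h)(0)$. Iterating this substitution for $i = 1, \dots, n$ (at each step the new $h$ remains a convolution of symmetric decreasing functions, hence symmetric decreasing) yields $\|f_1^* * \cdots * f_n^*\|_\infty \leq \|g_1 * \cdots * g_n\|_\infty$, which combined with the previous step finishes the proof. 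The main technical subtlety is justifying concavity of $H$ when $h$ is not smooth, which can be handled by approximating $h$ by smooth symmetric decreasing densities and passing to the limit, or by arguing directly that the distributional derivative of $H$ agrees almost everywhere with the nonincreasing function $h(r/2)$.
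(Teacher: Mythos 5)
Your proposal is correct, but note that the paper itself offers no proof of this statement: it is quoted from Rogozin's 1987 paper and used as a black box, with only a passing remark that the framework here (allowing $N_\infty(X_i)=0$) is formally more general than the cited references. So what you have written is an actual proof of the cited result rather than a variant of an in-paper argument. The route you take --- reduce to the convolution inequality $\|f_1*\cdots*f_n\|_\infty\le\|g_1*\cdots*g_n\|_\infty$, symmetrize via Brascamp--Lieb--Luttinger so that $\|f_1*\cdots*f_n\|_\infty\le (f_1^**\cdots*f_n^*)(0)$, then trade each $f_i^*$ for the uniform plateau $g_i$ by layer cake plus Jensen against the concave function $H(r)=\int_{-r/2}^{r/2}h$ --- is the standard rearrangement proof of Rogozin's theorem (essentially the argument behind the $d$-dimensional extension the paper cites as \cite{madiman2017rogozin}), and the key exchange step is sound: $\int_0^{M_i}r_i(t)\,dt=1=M_ic_i$, so Jensen with the uniform measure $dt/M_i$ gives exactly $(f_i^**h)(0)\le (g_i*h)(0)$, and the new $h$ stays symmetric decreasing at each iteration. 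Two small repairs. First, the opening reduction should discard every index with $N_\infty(X_i)=0$, not only those $X_i$ lacking a density: an absolutely continuous $X_i$ with unbounded density also has $N_\infty(X_i)=0$ and $Z_i=0$, and your conditioning/translation-invariance bound $M(S+X_i)\le M(S)$ disposes of these in exactly the same way, after which the surviving $X_i$ automatically have bounded densities. Second, the smoothing you worry about at the end is unnecessary: by symmetry $H(r)=\int_0^r h(u/2)\,du$ is an antiderivative of a nonincreasing function, hence concave with no regularity assumptions on $h$.
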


Note that our frame work here is formally more general than \cite{rogozin1987estimate} and \cite{bobkov2014bounds}

\begin{proof}[Proof of Corollary \ref{Ball to min-EPI}]
Suppose that, for $\delta > 1$
\begin{align} \label{eq:start}
    N_\infty(X_1 + \cdots + X_n) \leq  \frac \delta 2  \sum_{i=1}^n N_\infty(X_i),
\end{align}
then by Theorem \ref{thm: original Rogozin},
\begin{align*}
    N_\infty(Z_1 + \cdots + Z_n) \leq \frac{\delta} 2  \sum_{i=1}^n N_\infty(X_i).
\end{align*}
Writing $U_i = \frac{Z_i}{\sqrt{N_\infty(Z_i)}}$ and $\theta_i = \frac{N_\infty(X_j)}{\sum_j N_\infty(Z_j)}$ we can re-write this inequality as
\begin{align*}
    N_\infty (\theta_1 U_1 + \cdots + \theta_n U_n) \leq \frac{\delta}{2}
\end{align*}
where we observe that $\theta = (\theta_1, \dots, \theta_n)$ is a unit vector and $U = (U_1, \dots , U_n)$ is the uniform distribution on the unit cube.  Moreover since $U_i$ are log-concave and symmetric, $\sum_i \theta_i U_i = \langle \theta, U \rangle$ is as well, and hence $N_\infty(\theta_1 U_1 + \cdots + \theta_n U_n) = f^{-2}_{\langle \theta, U \rangle}(0) = \sigma^{-2}(\theta,0)$.  Thus, we have 
$$
\sigma(\theta,0) \geq \sqrt{\frac{2}{\delta}}.
$$

Now observe that the min-entropy is $2$-homegeneous, \textit{i.e.}\ $N_\infty(\lambda X) = \lambda^2 N_\infty(X)$. Therefore, \eqref{eq:almost-equality} reads as \eqref{eq:start}  with $\delta = (1-\varepsilon)^{-2}$. Hence
\begin{align*}
    \sigma(\theta,0) \geq 
    (1- \varepsilon) \sqrt{2}.
\end{align*}
Thus by Theorem \ref{thm: quantitative Ball}, there exist $i_o$ and $i_1$ such that
\begin{align*}
    \frac{1}{\sqrt{2}} (1-37.5 \varepsilon) \leq \theta_{i_o}, \theta_{i_1} \leq \frac{1}{\sqrt{2}} ( 1 + 2\varepsilon) 
\end{align*}
while
\begin{align*}
    \sum_{i \neq i_o,i_1} \theta_i^2 \leq 50 \varepsilon.
\end{align*}
Interpreting this in terms of the definition $\theta_j = \sqrt{N_\infty(X_j)/\sum_{i} N_\infty(X_i)}$.  This gives,
\begin{align*}
   (1-37.5\varepsilon)^2 \left(\frac{1}{2} \sum_{i=1}^n N_\infty(X_i) \right) \leq N_\infty(X_{i_o}), N_\infty(X_{i_1})) \leq (1+2\varepsilon)^2 \left( \frac{1}{2} \sum_{i=1}^n N_\infty(X_i) \right),
\end{align*}
while,
\begin{align*}
    \sum_{i \neq i_o, i_1} N_\infty(X_i) \leq 50 \varepsilon \sum_{i=1}^n N_\infty(X_i).
\end{align*}
This ends the proof of the Corollary.
\end{proof}

\begin{proof}[Proof of Theorem \ref{thm:equality_cases}]
We distinguish between sufficiency and necessity. The former being simpler.\\
{\it{- Necessity:}}\\
Writing for convenience $N_\infty(X_1)\geq N_\infty(X_2) \geq \cdots \geq N_\infty(X_n)$ by Corollary \ref{Ball to min-EPI} when $N_\infty(X_1) >0$, equality in \eqref{eq: min epi} implies that
\begin{align*}
    N_\infty(X_1) = N_\infty(X_2), \hspace{5mm} N_\infty(X_k) = 0 \hbox{ for } k \geq 3.
\end{align*}
That is
\begin{align*}
    N_\infty(X_1 + X_2 + X_3 \cdots + X_n) = N_\infty(X_1 + X_2) = N_\infty(X_1).
\end{align*}
and since symmetric rearrangement preserves min-entropy and reduces the entropy of independent sums, $N_\infty(X_1 + X_2) \geq N_\infty(X_1^*+ X_2^*) \geq \frac 1 2 (N_\infty(X_1^*) +N(X_2^*)) = N_\infty(X_1) = N_\infty(X_1 + X_2). $  Letting $f,g$ represent the densities of $X_1^*$ and $X_2^*$ respectively, this implies
\begin{align*}
    \|f*g\|_\infty = f*g(0) = \int f(y) g(y) dy = \int_{\{f =\| f\|_\infty \}} \!\!\! \|f\|_\infty g(y) dy + \int_{\{ f  < \|f\|_\infty\}} \!\!\! f(y) g(y) dy  = \|f \|_\infty
\end{align*}
which can only hold if $\{g > 0\} \subseteq \{ f  = \|f \|_\infty\}$.  Reversing the roles of $f$ and $g$, we must also have $\{f > 0 \} \subseteq \{ g = \|g\|_\infty\}$.  Since $\{f = \|f\|_\infty\} \subseteq \{ f > 0\}$ obviously holds, we have the following chain of inclusions,
\begin{align*}
    \{g > 0\} \subseteq \{ f  = \|f \|_\infty\} \subseteq \{ f > 0\} \subseteq \{ g = \|g\|_\infty\}  \subseteq \{ g > 0 \}.
\end{align*}
For this it follows that $X_1^*$ and $X_2^*$ are \textit{i.i.d.}\ uniform distributions.

Thus, $X_1$ and $X_2$ are uniform distributions as well.  Without loss of generality we may assume that $X_1$ and $X_2$ are uniform on sets of measure $1$, $K_1$ and $K_2$. Denote $f_i = \mathbbm{1}_{K_i}$.  Then
$f_1 * f_2$ is uniformly continuous and $f_1*f_2(x) \to 0$ with $|x| \to \infty$.  Indeed, because continuous compactly supported functions are dense in $L^2$, it follows\footnote{Given an $\varepsilon >0$, there exists $\phi$ continuous and compactly supported such that $\|\varphi - g\|_2 < \varepsilon/3$. Since $\varphi$ is continuous and compactly supported, it is uniformly continuous, and hence for small enough $y$, $\|\varphi_{\tau_y} - y\|_2 < \varepsilon/3$, Thus $\| g_{\tau_y} - g\| \leq \| g_{\tau_y} - \varphi_{\tau_y} \| + \| \varphi_{\tau_y} - \varphi\| + \|\varphi - g\| < \varepsilon.$ } that for $g_{\tau_y}(x) \coloneqq g(x+y)$, $\|g_{\tau_y} - g\|_2 \to 0$ for $y \to 0$.  Further $\| g_{\tau_{y_1}} - g_{\tau_{y_2}}\|_2 = \| g_{\tau_{y_1 - y_2}} - g\|_2$, so that for $|y_1 - y_2|$ sufficiently small, $\| g_{\tau_{y_1}} - g_{\tau_{y_2}}\|_2$ can be made arbitrarily small as well.  Thus,
\begin{align*}
    |f_1*f_2(x) - f_1*f_2(x')|
        &\leq
            \int |f_1(-y)||f_2(x+y) - f_2(x'+y)| dy
                \\
        &\leq
            \|f_1\|_2 \|(f_2)_{\tau_x} - (f_2)_{\tau_{x'}} \|_2
                \\
        &=
            \| (f_2)_{\tau_{x-x'}} - f_2 \|_2
\end{align*}
hence $f_1 * f_2$ is indeed uniformly continuous.

Taking $\varphi_i$ to be continuous, compactly supported functions approximating $f_i$ in $L^2$, we have
\begin{align*}
    \| \varphi_1 * \varphi_2 - f_1 * f_2 \|_\infty 
        &\leq 
            \| f_1 * (\varphi_2 - f_2) \|_\infty + \| \varphi_2*(\varphi_1 - f_1) \|_\infty 
                \\
        &\leq 
            \| f_1 \|_2 \|\varphi_2 - f_2\|_2 + \|\varphi_2\|_2 \|\varphi_1 - f_1\|_2 .
\end{align*}
Since the right hand side goes to zero, and $\varphi_1 * \varphi_2$ is compactly supported, it must be true that $f_1 *f_2(x)$ tends to zero for large $|x|$.  Thus $f_1*f_2$  attains its maximum value at some point $x$, and thus we can rewrite the equality of the min-entropies of $X_1+X_2$, $X_1$ and $X_2$, as $f_1*f_2(x) = |K_1 \cap (x - K_2)| = |K_1| = |K_2| =1$.  Thus almost surely $x - K_1 = K_2$.

Put $Y = X_2 + \cdots + X_n$. By the same argument, since $N_\infty (X_1 + Y) = \frac 1 2 \left(N_\infty(X_1) + N_\infty(Y) \right)$, $Y$ is uniform on a set $x' - K_1$.  Thus, $\Var(Y) = \sum_{i=2}^n \Var(X_i) = \Var(X_2)$. Hence, for $i > 2$, $\Var(X_i) = 0$ and the $X_i$ are deterministic.  Letting $A = K_1$, the proof of necessity is complete.

\medskip

\noindent
{\it{- Sufficiency:}}\\
To prove sufficiency, assume that $X_1$ is uniform on a set $A$, $X_2$ uniform on $x - A$ and $X_i$ a point mass for $i \geq 3$ then,
\begin{align*}
    N_\infty(X_1 + X_2 + X_3 + \cdots + X_n)
       & =
            N_\infty(X_1 + X_2) \\
            & = \left \| \frac{\mathbbm{1}_{A}}{|A|}* \frac{\mathbbm{1}_{x-A}}{|A|} \right\|^{-2}_\infty .
\end{align*}
Observe that
\begin{align*}
    \frac{\mathbbm{1}_{A}}{|A|}* \frac{\mathbbm{1}_{x-A}}{|A|}(x)
        &=
            \frac{1}{|A|^2} \int \mathbbm{1}_{A}(y)  \mathbbm{1}_{x-A}(x-y) dy
                \\
        &=
            \frac{1}{|A|},
\end{align*}
Thus $|A|^2 \geq N_\infty(X_1 + X_2)$ and it follows that $|A|^2= N_\infty(X_1 + X_2) = N_\infty(X_1) = N_\infty(X_2)$.
\end{proof}


\section{Quantitative khintchine's inequality}

In this section we prove Theorem \ref{th:quantitative khintchine}
that resembles the proof of Theorem \ref{thm: quantitative Ball}.
We need to recall some results from \cite{haagerup}.

Assume without loss of generality that $a_k \neq 0$ for all $k$.
Put
$$
F(s)=\frac{2}{\pi} \int_0^\infty \left(1 - \left|\cos \left( \frac{t}{\sqrt{s}} \right)\right|^s\right) \frac{dt}{t^2}, \qquad \qquad s >0 . 
$$
From \cite[Lemma 1.4 (and its  proof)]{haagerup}, we can extract that
$$
F(s) 
= 
\frac{2}{\sqrt{\pi s}} \frac{\Gamma \left( \frac{s+1}{2}\right)}{\Gamma \left( \frac{s}{2}\right)}
= 
\sqrt{\frac{2}{\pi}} \prod_{k=0}^\infty \left(1-\frac{1}{(s+2k+1)^2}\right)^\frac{1}{2}
$$ 
is an increasing function of $s$, with $F(2)=1/\sqrt{2}$ and $\lim_{s \to \infty} F(s) = \sqrt{\frac{2}{\pi}}$. 
Haagerup also proved \cite[Lemma 1.3]{haagerup} that
\begin{equation} \label{eq:h1}
R_1(a) \geq \sum_{k=1}^n a_k^2 F \left( \frac{1}{a_k^2}\right)
\end{equation}
with the convention that $a_k^2 F \left( \frac{1}{a_k^2}\right) = 0$ if $a_k=0$ (recall the definition of $R_1$ from \eqref{eq:khintchine}).
For completeness, let us reproduce the argument using Nazarov and Podkorytov's presentation \cite{NP00}. From the identity
$$
|s| = \frac{2}{\pi} \int_0^\infty (1 - \cos(st)) \frac{dt}{t^2}
$$
applied to $s = \sum_{k=1}^n a_k B_k$, we have
\begin{align*}
R_1(a) 
& = 
\mathbb{E} \left( \left| \sum_{k=1}^n a_k B_k \right|\right) \\
& =
\frac{2}{\pi} \int_0^\infty \left(1 - \mathbb{E} \left( \cos \left( t \sum_{k=1}^n a_k B_k \right) \right) \right) \frac{dt}{t^2} \\
& =
\frac{2}{\pi} \int_0^\infty \left(1 - \prod_{k=1}^n \cos(a_k t) \right) \frac{dt}{t^2} \\
\end{align*}
where at the last line we used that
\begin{align*}
\mathbb{E} \left( \cos \left( t \sum_{k=1}^n a_k B_k \right)\right)
& = 
\mathrm{Re} \left( \mathbb{E} \left( e^{i t \sum_{k=1}^n a_k B_k} \right) \right) \\
& =
\prod_{k=1}^n \cos(a_k t) .
\end{align*}
Since $\sum a_k^2=1$, the following Young's inequality $\prod_{k=1}^n s_k^{a_k^2} \leq \sum a_k^2 s_k$ holds  for any non-negative $s_1,\dots,s_n$. Therefore, (take $s_k=|\cos(a_k t)|^{a_k^{-2}}$), it holds
\begin{align*}
R_1(a) 
& \geq 
\frac{2}{\pi} \int_0^\infty \left(1 - \prod_{k=1}^n |\cos(a_k t)| \right) \frac{dt}{t^2} \\
& \geq 
\frac{2}{\pi} \int_0^\infty \left(1 - \sum_{k=1}^n a_k^2 |\cos(a_k t)|^{a_k^{-2}} \right) \frac{dt}{t^2} \\
& = 
\sum_{k=1}^n a_k^2 \frac{2}{\pi} \int_0^\infty \left(1 - |\cos(a_k t)|^{a_k^{-2}} \right) \frac{dt}{t^2} 
\end{align*}
which amounts to \eqref{eq:h1}.


Now observe that $R_1(a) \geq \max_k |a_k|$. Indeed, given $k_o$, multiplying by $B_{k_o}$, that satisfies $|B_{k_o}|=1$, it holds
\begin{align*}
R_1(a)
& = 
\mathbb{E}\left(|B_{k_o}| \left| \sum_{k=1}^n a_k B_k \right| \right) \\\ 
& =
\mathbb{E}\left( \left| a_{k_o} + \sum_{k=1}^n a_k B_kB_{k_o} \right| \right) \\
& \geq \left| \mathbb{E} \left( a_{k_o} + \sum_{k=1}^n a_k B_kB_{k_o} \right) \right| \\
& = |a_{k_o}| .
\end{align*}
It follows by assumption that $|a_{k}| \leq \frac{1+\varepsilon}{\sqrt 2}$ for any $k$.

Assume that $F(1/a_k^2) > \frac{1+\varepsilon}{\sqrt 2}$ for all $k$. Then, by \eqref{eq:h1} and monotonicity of $F$, it would hold
$$
R_1(a) 
\geq 
\sum_{k=1}^n a_k^2 F \left( \frac{1}{a_k^2}\right) 
> 
 \frac{1+\varepsilon}{\sqrt 2} .
$$
This contradicts the starting hypothesis $R_1(a) 
\leq \frac{1+\varepsilon}{\sqrt 2}$. Therefore, there exists at least one index, say $k_o$, such that $F(1/a_{k_o}^2) \leq \frac{1+\varepsilon}{\sqrt 2}$. Using Lemma \ref{lem:treport} we can conclude that
$$
|a_{k_o}| \geq \frac{1}{\sqrt{2}} \frac{1}{\sqrt{1+20 \varepsilon}} \geq \frac{1-10 \varepsilon}{\sqrt{2}}
$$
since $1/\sqrt{1+t} \geq 1 - \frac{t}{2}$ for any $t \in (0,1)$.

We iterate the argument. Assume that $F(1/a_k^2) > \frac{1+3\varepsilon}{\sqrt 2}$ for all $k \neq k_o$. From
\eqref{eq:h1} and monotonicity of $F$, it would hold (recall that $|a_{k}| \leq \frac{1+\varepsilon}{\sqrt 2}$ for any $k$ and in particular for $k_o$)
$$
R_1(a) 
\geq 
\sum_{k=1}^n a_k^2 F \left( \frac{1}{a_k^2}\right) 
> 
 \frac{1+3\varepsilon}{\sqrt 2} \sum_{k \neq k_o} a_k^2 + a_{k_o}^2 F \left( \frac{1}{a_{k_o}^2} \right)
\geq  
\frac{1+3\varepsilon}{\sqrt 2} \sum_{k \neq k_o} a_k^2 + a_{k_o}^2 F \left( \frac{2}{(1+\varepsilon)^2}\right)  .
$$
Now Lemma \ref{lem:treport2} guarantees that $F \left( \frac{2}{(1+\varepsilon)^2}\right) \geq \frac{1-\alpha \varepsilon}{\sqrt{2}}$, with $\alpha=\frac{\pi^2}{12}$, so that, since $\sum_{k \neq k_o}a_k^2 = 1 - a_{k_o}^2$ and
$|a_{k_o}| \leq \frac{1+\varepsilon}{\sqrt 2}$, it holds
\begin{align*}
R_1(a) & 
> 
\frac{1+3\varepsilon}{\sqrt 2} \sum_{k \neq k_o} a_k^2 + a_{k_o}^2\frac{1- \alpha \varepsilon}{\sqrt{2}} \\
& = 
\frac{1+3\varepsilon}{\sqrt 2} + a_{k_o}^2 \left( \frac{1- \alpha \varepsilon}{\sqrt{2}} - \frac{1+3\varepsilon}{\sqrt 2} \right) \\
& \geq 
\frac{1+3\varepsilon}{\sqrt 2} - \left( \frac{1+\varepsilon}{\sqrt{2}}\right)^2\frac{(3+\alpha) \varepsilon}{\sqrt{2}} \\
& = \frac{1+\varepsilon}{\sqrt 2} + \frac{\varepsilon}{4\sqrt{2}}\left(4 - (3+\alpha)(1+\varepsilon)^2 \right) \\
& > \frac{1+\varepsilon}{\sqrt 2}
\end{align*}
since for $\varepsilon  \in (0,1/100)$, $4 > (3+\alpha)(1+\varepsilon)^2$.
This again contradicts the hypothesis $R_1(a) 
\leq \frac{1+\varepsilon}{\sqrt 2}$. Therefore, there exists a second index $k_1 \neq k_o$, such that $F(1/a_{k_1}^2) \leq \frac{1+3\varepsilon}{\sqrt 2}$.
Lemma \ref{lem:treport} then implies that 
$$
|a_{k_1}| \geq \frac{1}{\sqrt{2}} \frac{1}{\sqrt{1+60 \varepsilon}} \geq \frac{1-30 \varepsilon}{\sqrt{2}} 
$$
(since, again, $1/\sqrt{1+t} \geq 1 - \frac{t}{2}$).
This proves the first part of the Theorem.

For the second part we use the previous results together with $\sum_{k=1}^n a_k^2=1$ to ensure that 
$$
\sum_{k \neq k_o, k_1} a_k^2 = 1 - a_{k_o}^2 - a_{k_1}^2 \leq 1 - \left( \frac{1-10 \varepsilon}{\sqrt{2}}\right)^2 - \left( \frac{1-30 \varepsilon}{\sqrt{2}}\right)^2 \leq \frac{80}{\sqrt{2}}\varepsilon \leq 57 \varepsilon .
$$
This ends the proof of the theorem.

\begin{lem} \label{lem:treport}
Fix $\varepsilon \in (0,3/100)$ and $s >0$ such that $F(s) \leq \frac{1+\varepsilon}{\sqrt 2}$. Then
$$
s \leq 2(1+ 20 \varepsilon) .
$$
\end{lem}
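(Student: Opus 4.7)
The plan is to exploit Haagerup's Weierstrass product representation recalled earlier in the paper,
$$
F(s)^2 = \frac{2}{\pi}\prod_{k=0}^\infty \Bigl(1-\frac{1}{(s+2k+1)^2}\Bigr),
$$
together with the special value $F(2)^2 = 1/2$ (which is equivalent to the Wallis-type identity $\prod_{k\geq 0}(1-1/(2k+3)^2) = \pi/4$). Dividing the two evaluations yields the clean factorisation
$$
\frac{F(s)^2}{F(2)^2} = \prod_{k=0}^\infty \frac{1-1/(s+2k+1)^2}{1-1/(2k+3)^2}.
$$

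First I dispose of the trivial case $s\leq 2$, where the conclusion $s\leq 2(1+20\varepsilon)$ is automatic, and assume $s>2$. Then every factor in the product above is $\geq 1$, so retaining only the $k=0$ factor gives the one-line lower bound
$$
\frac{F(s)^2}{F(2)^2}\geq \frac{1-1/(s+1)^2}{1-1/9}=\frac{9-9/(s+1)^2}{8}.
$$
Squaring the hypothesis $F(s)\leq (1+\varepsilon)/\sqrt 2 = (1+\varepsilon)F(2)$ and combining with the preceding inequality, I obtain $9-9/(s+1)^2\leq 8(1+\varepsilon)^2$, i.e.
$$
(s+1)^2\leq \frac{9}{9-8(1+\varepsilon)^2}=\frac{9}{1-16\varepsilon-8\varepsilon^2},
$$
with a strictly positive denominator throughout the admissible range $\varepsilon\in(0,3/100)$.

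The only remaining step is then a routine algebraic verification: to conclude $s\leq 2(1+20\varepsilon)$ it suffices to show
$$
\frac{9}{1-16\varepsilon-8\varepsilon^2}\leq (3+40\varepsilon)^2\qquad\text{for all }\varepsilon\in(0,3/100).
$$
Cross-multiplying and expanding reduces this to the polynomial inequality
$$
96\varepsilon - 2312\varepsilon^2 - 27520\varepsilon^3 - 12800\varepsilon^4\geq 0,
$$
whose left side is easily seen to be positive at the endpoint $\varepsilon=3/100$ and, after division by $\varepsilon>0$, decreasing in $\varepsilon$; hence it is positive throughout the range.

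The main (mild) obstacle is precisely this concluding algebraic check, where the constant $20$ of the statement is close to optimal for the one-factor truncation of the product; sharper constants could be obtained by retaining further factors ($k\geq 1$), but would complicate the elementary estimate unnecessarily.
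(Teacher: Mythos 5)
Your proof is correct, and it takes a genuinely different route from the paper. The paper first rules out $s>3$ by computing $F(3)=\tfrac{4}{\pi\sqrt 3}>\tfrac{1+\varepsilon}{\sqrt 2}$, then writes $F(s)-F(2)=\int_2^s F'(t)\,dt$ and bounds $F'$ from below on $[2,3]$ by $\tfrac{1}{40\sqrt 2}$ via the logarithmic derivative of Haagerup's product, which yields $s-2\le 40\varepsilon$. You instead compare $F(s)^2$ with $F(2)^2=1/2$ directly through the ratio of the two infinite products, observe that for $s>2$ every factor of the ratio is at least $1$, keep only the $k=0$ factor to get $(s+1)^2\le 9/\bigl(1-16\varepsilon-8\varepsilon^2\bigr)$, and close with the algebraic check $9/\bigl(1-16\varepsilon-8\varepsilon^2\bigr)\le(3+40\varepsilon)^2$ on $(0,3/100)$ — I verified the expansion $96\varepsilon-2312\varepsilon^2-27520\varepsilon^3-12800\varepsilon^4\ge 0$ and your monotonicity-plus-endpoint argument, and the positivity of the denominator on the stated range. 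Your variant is shorter and more self-contained: it needs neither the derivative formula for $F$ nor the separate step $s\le 3$, only the product representation and monotonicity of each factor in $s$, and it recovers the same constant $20$. What the paper's derivative method buys in exchange is reusability: the same bound on $F'(t)=F(t)\sum_k\frac{1}{(t+2k)(t+2k+1)(t+2k+2)}$ is exploited again (from above) in Lemma \ref{lem:treport2}, so the two lemmas there share one computation, whereas your truncation argument would have to be adapted separately for that complementary estimate.
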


\begin{proof}
Assume that $s \geq 2$ (otherwize there is nothing to prove).
By expansion, $F(s) = F(2) + \int_2^s F'(t)dt \leq \frac{1+\varepsilon}{\sqrt 2}$.
Therefore, since $F(2)=1/\sqrt{2}$, 
$$
\int _2^s F'(t)dt \leq \frac{\varepsilon}{\sqrt 2} .
$$
Observe that $F(3) = \frac{4}{\pi \sqrt{3}} \simeq 0.74 \geq 0.71 \simeq \frac{1.01}{\sqrt{2}} \geq \frac{1+\varepsilon}{\sqrt 2}$.  Hence, since $F$ is increasing, necessarily $s \leq 3$. It follows that
$$
(s-2) \inf_{2 \leq t \leq 3} F'(t) \leq \frac{\varepsilon}{\sqrt{2}} 
$$
and we are left with estimating $\inf_{2 \leq t \leq 3} F'(t)$. Using the expression of $F$ above as a product,
we deduce that, for $t \in (2,3)$ 
\begin{align*}
F'(t) 
& = 
F(t) \sum_{k=0}^\infty \frac{1}{(t+2k)(t+2k+1)(t+2k+2)} \\
& \geq 
F(2) \sum_{k=0}^\infty \frac{1}{(2k+3)(2k+4)(2k+5)} \\
& \geq \frac{1}{40 \sqrt{2}}
\end{align*}
where in the last inequality we used that $F(2)=1/\sqrt{2}$ and estimated from below the infinite sum by the first 5 terms\footnote{Alternatively one can argue that
$\sum_{k=0}^\infty \frac{1}{(2k+3)(2k+4)(2k+5)} \geq \sum_{k=0}^\infty \frac{1}{(2k+4)^3} = \frac{1}{8}(\zeta(3)-1)$ where $\zeta(3) \simeq 1.202 \geq 1.2$ is the Riemann zeta function, from which we deduce that the infinite series is bounded below by $1/40$.}. The expected result follows.
\end{proof}

\begin{lem} \label{lem:treport2}
Fix $\varepsilon \in (0,1/100)$. Then
$$
F \left( \frac{2}{(1+\varepsilon)^2}\right)  \geq \frac{1-\alpha \varepsilon}{\sqrt{2}} 
$$
with $\alpha = \pi^2/12$.
\end{lem}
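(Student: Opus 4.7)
The plan is to control the variation of $F$ near $s = 2$ by the fundamental theorem of calculus. Writing $s_0 \coloneqq 2/(1+\varepsilon)^2 \leq 2$, one has
$F(s_0) = F(2) - \int_{s_0}^{2} F'(t)\,dt$
with $F(2) = 1/\sqrt{2}$, so it suffices to produce a uniform upper bound on $F'$ on the short interval $[s_0, 2]$ and to estimate its length linearly in $\varepsilon$.

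For the length, the elementary bound $2 - s_0 = 2 - 2/(1+\varepsilon)^2 \leq 4\varepsilon$ holds for every $\varepsilon > 0$ (it rearranges to $0 \leq 3\varepsilon^2 + 2\varepsilon^3$). For the derivative, I will reuse the identity
$F'(t) = F(t)\sum_{k \geq 0} \frac{1}{(t+2k)(t+2k+1)(t+2k+2)}$
already exploited in the proof of Lemma \ref{lem:treport}. On $[s_0, 2]$ the prefactor satisfies $F(t) \leq F(2) = 1/\sqrt{2}$ by monotonicity of $F$, and, since $s_0 \geq 1$ whenever $\varepsilon \leq 1/100$, each summand is dominated by its value at $t = 1$. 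This yields the $s_0$-free estimate
$F'(t) \leq \frac{1}{\sqrt{2}} \sum_{k \geq 0} \frac{1}{(2k+1)(2k+2)(2k+3)}.$
A routine partial-fraction manipulation (writing each term as half the telescoping difference $\frac{1}{(2k+1)(2k+2)} - \frac{1}{(2k+2)(2k+3)}$) reduces the remaining series to an alternating series that sums to $2\log 2 - 1$, giving the closed form $\log 2 - \tfrac{1}{2}$ for the series.

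Putting the two estimates together gives
$F(s_0) \geq \dfrac{1 - (4\log 2 - 2)\varepsilon}{\sqrt{2}},$
and the lemma follows from the numerical inequality $4\log 2 - 2 < \pi^2/12$ (approximately $0.7726 < 0.8225$), which leaves comfortable slack for the declared constant $\alpha = \pi^2/12$. I do not foresee a serious obstacle: the only step with any real content is the closed-form value of $\sum_{k \geq 0} 1/((2k+1)(2k+2)(2k+3))$, and the generous gap between $4\log 2 - 2$ and $\pi^2/12$ means that even a crude upper bound on this series (for instance via comparison with an integral) would be enough to finish.
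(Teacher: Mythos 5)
Your proposal is correct and follows essentially the same route as the paper: expand $F$ around $s=2$ via the fundamental theorem of calculus, bound $2-\tfrac{2}{(1+\varepsilon)^2}$ linearly in $\varepsilon$, and control $\sup F'$ on the interval through the same log-derivative identity $F'(t)=F(t)\sum_{k\ge 0}\frac{1}{(t+2k)(t+2k+1)(t+2k+2)}$ together with $F(t)\le F(2)=1/\sqrt{2}$. The only difference is bookkeeping: you evaluate the series exactly at $t=1$ (getting $\log 2-\tfrac12$, so the constant $4\log 2-2<\pi^2/12$), whereas the paper uses the cruder bound $\pi^2/48$ via $(t+2k)(t+2k+1)(t+2k+2)\ge 8(k+1)^2$; both comfortably yield $\alpha=\pi^2/12$.
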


\begin{proof}
By expansion, 
$$
F\left(\frac{2}{(1+\varepsilon)^2} \right) = F(2) + \int_2^{\frac{2}{(1+\varepsilon)^2}} F'(t)dt 
\geq \frac{1}{\sqrt{2}} - \left(2 - \frac{2}{(1+\varepsilon)^2} \right) \sup_{\frac{2}{(1+\varepsilon)^2} \leq t \leq 2}F'(t) .
$$
Now, as in the proof of Lemma \ref{lem:treport}, for any $t \in (\frac{2}{(1+\varepsilon)^2} , 2)$, it holds
$$
F'(t) = F(t) \sum_{k=0}^\infty \frac{1}{(t+2k)(t+2k+1)(t+2k+2)} 
\leq F(2) \sum_{k=0}^\infty \frac{1}{8(k+1)^2} = \frac{\pi^2}{48 \sqrt{2}}
$$
where the inequality follows from the rought estimate
$(t+2k)(t+2k+1)(t+2k+2) \geq 8(k+1)^2$, valid for any $k$ and any $t \in (\frac{2}{(1+\varepsilon)^2} , 2)$ (this is trivial for $t \geq 1$ and $k \geq 1$, the case $k=0$ has to be treated separately, details are left to the reader).

Combining with the previous estimate, we get
$$
F\left(\frac{2}{(1+\varepsilon)^2} \right) 
\geq 
\frac{1}{\sqrt{2}} \left( 1 - \frac{\pi^2}{48}\left(2 - \frac{2}{(1+\varepsilon)^2} \right) \right) 
= 
\frac{1}{\sqrt{2}} \left( 1 - \frac{\pi^2}{24} \frac{2 \varepsilon + \varepsilon^2}{(1+\varepsilon)^2}\right)
\geq 
\frac{1}{\sqrt{2}} \left( 1 - \frac{\pi^2}{12} \varepsilon \right)
$$
which is the expected result.
\end{proof}

\begin{remark}
The range $\varepsilon \in (0,1/100)$ in Theorem \ref{th:quantitative khintchine} is technical and here to guarantee that $(1+\varepsilon)/\sqrt{2} \leq \sqrt{2}/\sqrt{\pi}=\lim_{s \to \infty}F(s)$ and also that 
$F(3) \geq (1+\varepsilon)/\sqrt{2}$ (see the proof of Lemma \ref{lem:treport} above).
\end{remark}


\bibliographystyle{plain}
\bibliography{quantitative-slicing}

\end{document}